\theoremstyle{plain}
\newtheorem{thm}{Theorem}
\newtheorem{lem}[thm]{Lemma}
\newtheorem{prop}[thm]{Proposition}
\theoremstyle{definition}
\newtheorem{definition}[thm]{Definition}
\newtheorem{rem}[thm]{Remark}
\newtheorem{exam}[thm]{Example}
\numberwithin{thm}{section}
\numberwithin{equation}{section}
\newcommand{\EQ}[1]{\eqref{eq:#1}} 
\newcommand{\LEM}[1]{Lemma~\ref{lem:#1}}    
\newcommand{\THM}[1]{Theorem~\ref{thm:#1}}  
\newcommand{\REM}[1]{Remark~\ref{rem:#1}}  
\newcommand{\PROP}[1]{Proposition~\ref{prop:#1}}
\newcommand{\SEC}[1]{Section~\ref{sec:#1}}  
\newcommand{\EXAM}[1]{Example~\ref{exam:#1}}
\newcommand{\APP}[1]{Appendix~\ref{app:#1}}
\newcounter{hypo}
\newcommand{\HYP}[1]{\ref{hyp:#1}}
\renewcommand{\labelenumi}{(\roman{enumi})}
\DeclareMathOperator{\trace}{trace}
\DeclareMathOperator{\divg}{div}
\newcommand{\Sy}{\ensuremath{\mathbb{S}^n}}
\newcommand{\consp}{\delta_1}
\newcommand{\consz}{\delta_0}
\newcommand{\puccisub}[2]{\mathcal{P}^-_{#1,#2}}
\newcommand{\Puccisub}[2]{\mathcal{P}^+_{#1,#2}}
\newcommand{\pucci}{\mathcal{P}^-}
\newcommand{\Pucci}{\mathcal{P}^+}
\newcommand{\inOmega}{\quad \mbox{in} \quad \Omega}
\newcommand{\ep}{\ensuremath{\varepsilon}}
\newcommand{\R}{\ensuremath{\mathbb{R}}}
\newcommand{\N}{\ensuremath{\mathbb{N}}}
\newcommand{\consABP}{C_0}
\newcommand{\consEESD}{C}
\newcommand{\imat}{I_n}
\newcommand{\radonmeas}{\ensuremath{\mathcal{R}(\bar{\Omega})}} 
\newcommand{\probmeas}{\ensuremath{\mathcal{M} ( \bar{\Omega} )     }} 
\newcommand{\dvmeas}{\ensuremath{\mathcal{V}(F,\Omega)}} 
\newcommand{\dvmeascust}[2]{\ensuremath{\mathcal{V}(#1,#2)}}
\newcommand{\posfun}{\ensuremath{C^2_+({\bar{\Omega}})}}
\newcommand{\weakly}{\ensuremath{\rightharpoonup}}
\newcommand{\efunpos}{\ensuremath{\varphi^+_{1}}}
\newcommand{\efunadjpos}[1]{\ensuremath{\varphi^*_{#1}}}
\journal{Journal of Differential Equations}
\begin{document}

\begin{frontmatter}

\title{The Dirichlet problem for the Bellman equation at resonance}
\author{Scott N. Armstrong}
\address{Department of Mathematics,
University of California, Berkeley, CA 94720.}
\ead{sarm@math.berkeley.edu}
\ead[url]{http://math.berkeley.edu/\~{}sarm}
\date{\today}

\begin{abstract}
We generalize the Donsker-Varadhan minimax formula for the principal eigenvalue of a uniformly elliptic operator in nondivergence form to the first principal half-eigenvalue of a fully nonlinear operator which is concave (or convex) and positively homogeneous. Examples of such operators include the Bellman operator and the Pucci extremal operators. In the case that the two principal half-eigenvalues are not equal, we show that the measures which achieve the minimum in this formula provide a partial characterization of the solvability of the corresponding Dirichlet problem at resonance.
\end{abstract}

\begin{keyword}
Hamilton-Jacobi-Bellman equation \sep fully nonlinear elliptic equation \sep principal eigenvalue \sep Dirichlet problem

\MSC 35J60 \sep 35P30 \sep 35B50

\end{keyword}

\end{frontmatter}


\section{Introduction} \label{sec:introduction}

Consider a nondivergence form uniformly elliptic operator
\begin{equation}\label{eq:linearoperator}
L = -a^{ij}(x)\partial_i\partial_j + b^j(x) \partial_j + c(x)
\end{equation}
in a smooth, bounded domain $\Omega \subseteq \R^n$. The celebrated minimax formula of Donsker and Varadhan \cite{Donsker:1975,Donsker:1976} states that the principal eigenvalue $\lambda_1(L,\Omega)$ of $L$ can be expressed by the minimax formula
\begin{equation}\label{eq:DV-minimax}
\lambda_1(L,\Omega) = \min_{\mu \in \probmeas} \sup_{u\in C^2_+(\bar{\Omega})} \int_\Omega \left( \frac{Lu}{u} \right)\! (x)\, d\mu(x).
\end{equation}
Here $\probmeas$ denotes the space of Borel probability measures on $\bar{\Omega}$, and $C^2_+(\bar{\Omega})$ is the set of positive $C^2$ functions on $\bar{\Omega}$. The minimum in \EQ{DV-minimax} is achieved by a unique probability measure $\mu$, given by
\begin{equation*}
d\mu(x) = \varphi_1(x) \varphi^*_1(x)\, dx,
\end{equation*}
where $\varphi_1$ is the principal eigenfunction of $L$, and $\varphi^*_1$ is the principal eigenfunction of the adjoint operator $L^*$, normalized according to
\begin{equation*}
\int_\Omega \varphi_1(x) \varphi^*_1(x) \, dx = 1.
\end{equation*}
Consequently, we may characterize the solvability of the boundary value problem
\begin{equation}\label{eq:DP-intro}
\left\{ \begin{aligned}
& L u = \lambda_1(L,\Omega) u + f & \mbox{in} & \ \Omega, \\
& u = 0 & \mbox{on} & \ \partial \Omega,
\end{aligned}\right.
\end{equation}
in terms of the measure $\mu$. According to the Fredholm alternative, there exists a solution of the problem \EQ{DP-intro} if and only if
\begin{equation}\label{eq:solvability-criterion}
0 = \int_\Omega f(x) \varphi^*_1(x) \, dx=  \int_\Omega \frac{f}{\varphi_1}\, d\mu.
\end{equation}

\medskip

Let $\{ L^k \}_{k \in A}$ be a family of linear, uniformly elliptic operators in nondivergence form. Lions \cite{Lions:1983d} was the first to study the principal half-eigenvalues of the Hamilton-Jacobi-Bellman operator
\begin{equation}\label{eq:bellman}
H(D^2u,Du,u,x) : = \inf_{k \in A} \left\{ L^k u \right\}.
\end{equation}
He demonstrated the existence of an eigenvalue $\lambda^+_1(H,\Omega)$ corresponding to a positive eigenfunction $\varphi^+_1 > 0$ and another eigenvalue $\lambda^-_1(H,\Omega)$ corresponding to a negative eigenfunction $\varphi^-_1 < 0$. The numbers $\lambda^\pm_1(H,\Omega)$ are called \emph{principal eigenvalues}, \emph{half-eigenvalues}, or \emph{demi-eigenvalues} of the operator $H$. While in general $\lambda^+_1(H,\Omega) \neq \lambda^-_1(H,\Omega)$, the concavity of $H$ ensures that the inequality $\lambda^+_1(H,\Omega) \leq \lambda^-_1(H,\Omega)$ is satisfied.

The principal half-eigenvalues have many properties analogous to the principal eigenvalue of a linear operator. The operator $H $ satisfies the comparison principle in the domain $\Omega$ if and only if $\lambda^+_1(H,\Omega) > 0$.  The Dirichlet problem
\begin{equation} \label{eq:DP-intro-2}
\left\{ \begin{aligned}
& H(D^2u,Du,u,x) = \lambda u + f & \mbox{in} & \ \Omega,\\
& u = 0 & \mbox{on} & \ \partial \Omega,
\end{aligned} \right.
\end{equation}
has a unique solution $u\in W^{2,\infty}(\Omega)$ for any given smooth function $f$ if $\lambda < \lambda^+_1(H,\Omega)$. Moreover, \EQ{DP-intro-2} has a unique nonpositive solution for any given smooth nonpositive $f$ if and only if $\lambda < \lambda^-_1(H,\Omega)$. These solutions can be expressed as the value functions of an optimal stochastic control problem associated with the operators $L^k$.

In \cite{Lions:1983d}, these facts were proved using mostly stochastic methods. Recently, several authors \cite{Birindelli:2006,Birindelli:2007,Ishii:preprint, Quaas:2008} employed PDE methods to generalize the results of \cite{Lions:1983d} to more general elliptic operators. Ishii and Yoshimura \cite{Ishii:preprint} have shown that an operator $F=F(D^2u,Du,u,x)$ possesses two principal half-eigenvalues assuming only that $F$ is uniformly elliptic and positively homogeneous jointly in its first three arguments (see \THM{eigenvalues}), and established related comparison principles and existence for the Dirichlet problem. Similar results have been independently obtained by Birindelli and Demengel \cite{Birindelli:2006,Birindelli:2007}, who considered degenerate, singular operators like the $p$-Laplacian, and Quaas and Sirakov \cite{Quaas:2008}, who studied nonlinear operators which are also convex or concave in $u$, but may be only measurable in $x$.

\medskip

In this paper, we will generalize the minimax formula \EQ{DV-minimax} to the first principal eigenvalue of an operator that is concave (or convex), as well as uniformly elliptic and positively homogeneous. Motivated by the solvability condition \EQ{solvability-criterion}, we will show that the probability measures(s) attaining the minimum provide a partial answer to the question of existence of solutions to the corresponding Dirichlet problem.

\begin{thm}\label{thm:minimax-eigenvalue}
Suppose that $\Omega \subseteq \R^n$ is a smooth bounded domain, and $F$ is a nonlinear operator satisfying \HYP{Fcontinuous}, \HYP{Felliptic}, \HYP{Fhomogeneous}, and \HYP{Fconcave}, below. Then the principal half-eigenvalue $\lambda^+_1(F,\Omega)$ satisfies the minimax formula
\begin{equation}\label{eq:minimax-1}
\lambda^+_1(F,\Omega) = \min_{\mu \in \probmeas} \sup_{\varphi\in \posfun} \int_\Omega \frac{F(D^2\varphi(x),D\varphi(x),\varphi(x),x)}{\varphi(x)}\, d\mu(x).
\end{equation}
Moreover, for each probability measure $\mu \in \probmeas$ which attains the minimum in \EQ{minimax-1}, there exists a function $\varphi^*_\mu \in L^{n/(n-1)}(\Omega)$ such that $\varphi^*_\mu > 0$ a.e. in $\Omega$ and $d\mu = \varphi^*_\mu \varphi^+_1\, dx$.
\end{thm}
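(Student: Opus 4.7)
The plan is to prove the minimax equality in both directions and then extract the density structure of the minimizer via a first-variation argument. The ``easy'' inequality $\min_{\mu}\sup_\varphi \ge \lambda^+_1(F,\Omega)$ is obtained by testing against a regularization of the positive half-eigenfunction $\efunpos$. Since $\efunpos$ vanishes on $\partial\Omega$ and thus does not lie in $\posfun$, I would use the perturbations $\varphi_\epsilon := \efunpos + \epsilon$; positive homogeneity gives $F(\varphi_\epsilon)/\varphi_\epsilon \to \lambda^+_1(F,\Omega)$ in the interior as $\epsilon\to 0$, and a careful boundary treatment --- using the Hopf-type lower bound on $D\efunpos$ and the homogeneity of $F$ --- lets me pass to the limit in $\int g_{\varphi_\epsilon}\,d\mu$ for each $\mu\in\probmeas$, where $g_\varphi(x) := F(D^2\varphi,D\varphi,\varphi,x)/\varphi$.

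The substantive inequality $\min_\mu \sup_\varphi \le \lambda^+_1(F,\Omega)$ I would establish by Hahn--Banach separation. Inside $C(\bar\Omega)$, consider the closed convex hull
\[
\mathcal{K} := \overline{\mathrm{conv}}\bigl\{g_\varphi - \lambda^+_1(F,\Omega) : \varphi\in\posfun\bigr\}.
\]
If I can show $\mathcal{K}$ is disjoint from the open convex cone $\{h\in C(\bar\Omega): h>\delta \text{ on }\bar\Omega\}$ for every $\delta>0$, then separating and taking a weak-$*$ limit in $\probmeas$ of the resulting probability measures as $\delta\to 0$ produces the desired $\mu$ with $\sup_\varphi\int g_\varphi\,d\mu\le\lambda^+_1(F,\Omega)$. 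The main obstacle of the entire proof is this disjointness claim, which I would attack by contradiction via a \emph{geometric mean} construction. Suppose $\sum_i\alpha_i g_{\varphi_i}\ge\lambda^+_1(F,\Omega)+\delta$ pointwise on $\bar\Omega$ for some convex combination of $\varphi_i\in\posfun$. Set $\psi:=\prod_i\varphi_i^{\alpha_i}\in\posfun$; the logarithmic derivatives compute as $D\psi/\psi=\sum_i\alpha_i D\varphi_i/\varphi_i$, and the matrix Jensen inequality $(\sum_i\alpha_i v_i)^{\otimes 2}\le\sum_i\alpha_i v_i^{\otimes 2}$ yields
\[
\frac{D^2\psi}{\psi}\le\sum_i\alpha_i\frac{D^2\varphi_i}{\varphi_i}.
\]
Combining these with the homogeneity identity $F(\varphi)/\varphi=F(D^2\varphi/\varphi,D\varphi/\varphi,1,x)$, the degenerate ellipticity of $F$, and concavity of $F$ in $(M,p,u)$, I obtain $g_\psi\ge\sum_i\alpha_i g_{\varphi_i}\ge\lambda^+_1(F,\Omega)+\delta$. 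Then $\psi\in\posfun$ satisfies $F(\psi)\ge(\lambda^+_1(F,\Omega)+\delta)\psi$, contradicting the variational characterization of $\lambda^+_1(F,\Omega)$ inherited from Ishii--Yoshimura \cite{Ishii:preprint}.

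For the density representation of a minimizer $\mu$, I would first use sharply localized test functions $\varphi$ (for which, by homogeneity, $g_\varphi$ can be made arbitrarily large near any prescribed boundary or interior point) to rule out any mass of $\mu$ on $\partial\Omega$ and any atomic part. I would then perform a first-variation analysis: for $\eta\in C^2(\bar\Omega)$ and small $|t|$, the perturbation $\efunpos+t\eta$ (after the regularization used in the easy inequality) is admissible, and since $g_{\efunpos}\equiv\lambda^+_1(F,\Omega)$ while $\mu$ is a minimizer, expanding to first order yields the Euler--Lagrange identity
\[
\int_\Omega\frac{L\eta-\lambda^+_1(F,\Omega)\,\eta}{\efunpos}\,d\mu = 0 \qquad\text{for all }\eta\in C^2(\bar\Omega),
\]
where $L$ is the linearization of $F$ at $\efunpos$ --- a linear uniformly elliptic operator with bounded measurable coefficients extracted from the subdifferential of the concave $F$. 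This says that $\varphi^*_\mu:=(d\mu/dx)/\efunpos$ is a very weak adjoint eigenfunction of $L$. The $L^{n/(n-1)}$ integrability then follows by duality against the ABP/Aleksandrov estimate for the forward equation $Lu=f$ (whose $W^{2,n}$-solutions pair against $L^{n/(n-1)}$), and $\varphi^*_\mu>0$ a.e.\ comes from a Harnack / strong maximum principle for the adjoint.
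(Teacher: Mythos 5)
Your proof of the minimax identity \EQ{minimax-1} is essentially sound and rests on the same key computation as the paper: the geometric-mean interpolation $\psi=\prod_i\varphi_i^{\alpha_i}$ together with \HYP{Felliptic}, \HYP{Fhomogeneous} and \HYP{Fconcave} shows that $\varphi\mapsto g_\varphi$ is concave-like, and the contradiction with the maximin characterization \EQ{maximin-formula-2} does the rest. The paper packages this as an application of Sion's minimax theorem (with $J(\mu,\varphi)$ linear in $\mu$ and concave-like in $\varphi$) plus weak-$*$ compactness of $\probmeas$; your Hahn--Banach separation argument is a legitimate hand-rolled version of the same special case, and your ``easy'' direction, while more laborious than the trivial inequality $\inf\sup\geq\sup\inf$ combined with \EQ{maximin-formula-2}, is not wrong.

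The second half of your argument --- the representation $d\mu=\varphi^*_\mu\efunpos\,dx$ with $\varphi^*_\mu\in L^{n/(n-1)}$ --- has a genuine gap. Your first-variation step presupposes that $F$ is differentiable in $(M,p,z)$ along $\efunpos$, which is not among the hypotheses: a concave $F$ has only a superdifferential there, the two one-sided derivatives of $s\mapsto J(\mu,\efunpos+s\eta)$ need not agree, and you obtain at best a pair of inequalities involving possibly different elements of the superdifferential rather than the clean identity $\int(L\eta-\lambda^+_1\eta)\,\varphi^*_\mu\,dx=0$. Indeed, if your Euler--Lagrange equation held for a single uniformly elliptic $L$, the Fredholm/Krein--Rutman simplicity of the adjoint principal eigenfunction would force $\dvmeas$ to be a singleton, contradicting \EXAM{nonunique-measures}; the paper isolates exactly the differentiable case in \PROP{Fdiffadj}. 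Two further steps are also unsupported: ruling out atoms and boundary mass does not give absolute continuity with respect to Lebesgue measure (a minimizer could a priori charge a null set of positive Hausdorff dimension), and nothing in your sketch controls $\int_\Omega(\efunpos)^{-1}\,d\mu$ near $\partial\Omega$, which is needed even to make sense of your identity. The paper's route avoids all of this: it proves directly the quantitative bound $\int_\Omega |f|/\efunpos\,d\mu\leq C_1\|f\|_{L^n(\Omega)}$ by testing $J(\mu,\varphi^{\lambda,f}+\ep)$ against the near-resonant solutions $\varphi^{\lambda,f}$ of \PROP{quantify-MP-lambda} and letting $\lambda\nearrow\lambda^+_1(F,\Omega)$, where the blow-up rate is controlled by the refined ABP inequality of \THM{ABP-eigen} with its explicit $(\lambda^+_1(F,\Omega)-\lambda)^{-1}$ dependence; the representation and the $L^{n/(n-1)}$ bound then follow from Riesz representation on $L^n(\Omega)$, and $\varphi^*_\mu>0$ a.e.\ from the strict domain monotonicity \EQ{eigenvalues-monotonic-in-domain}. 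Some quantitative substitute for this step is indispensable, and your sketch does not supply one.
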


Let $\dvmeas \subseteq \probmeas$ denote the subset of probability measures attaining the minimum in \EQ{minimax-1}. In contrast to the situation for linear operators, $\dvmeas$ is not a singleton set in general (see \EXAM{nonunique-measures} below). Our next result is a partial characterization of the solvability of the Dirichlet problem
\begin{equation} \label{eq:DP}
\left\{ \begin{aligned}
& F(D^2u,Du,u,x) =  \lambda^+_1(F,\Omega) u + f & \mbox{in} & \ \Omega, \\
& u = 0 & \mbox{on} & \ \partial \Omega,
\end{aligned}\right.
\end{equation}
expressed in terms of $\dvmeas$.

\begin{thm}\label{thm:Dirichlet-at-resonance}
Let $\Omega$ and $F$ be as in \THM{minimax-eigenvalue}, and $f\in C(\Omega) \cap L^n(\Omega)$. Then the inequality \begin{equation} \label{eq:necessary}
\max_{\mu \in \dvmeas} \int_\Omega f \varphi^*_\mu \, dx \leq 0
\end{equation}
is necessary for the solvability of the Dirichlet problem \EQ{DP}. Suppose in addition that
\begin{equation}\label{eq:plus-less-minus}
\lambda^+_1(F,\Omega) < \lambda^-_1(F,\Omega).
\end{equation}
Then the strict inequality
\begin{equation} \label{eq:sufficient}
\max_{\mu \in \dvmeas} \int_\Omega f \varphi^*_\mu \, dx < 0
\end{equation}
is sufficient for the solvability of the Dirichlet problem \EQ{DP}.
\end{thm}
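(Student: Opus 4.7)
For necessity, I test the minimax formula \eqref{eq:minimax-1} with a perturbation of $\varphi^+_1$. Given a solution $u$ of \eqref{eq:DP} and any $\mu \in \dvmeas$, set $\varphi_{\epsilon,\delta} := \varphi^+_1 + \epsilon u + \delta \in \posfun$ for small $\epsilon, \delta > 0$. Because $F$ is concave and positively $1$-homogeneous it is superadditive in its first three arguments, and hence
\[
F\big(D^2\varphi_{\epsilon,\delta}, D\varphi_{\epsilon,\delta}, \varphi_{\epsilon,\delta}, x\big) \;\geq\; \lambda^+_1\varphi^+_1 + \epsilon\big(\lambda^+_1 u + f\big) + \delta\,F(0,0,1,x).
\]
Dividing by $\varphi_{\epsilon,\delta}$, integrating against $\mu$, and using that $\int F(D^2\varphi)/\varphi\, d\mu \leq \lambda^+_1$ for $\mu \in \dvmeas$, one gets
\[
0 \;\geq\; \int_\Omega \frac{\epsilon f + \delta\big(F(0,0,1,x) - \lambda^+_1\big)}{\varphi^+_1 + \epsilon u + \delta}\, d\mu.
\]
Rewriting $d\mu = \varphi^+_1\varphi^*_\mu\, dx$ and using that $\varphi^+_1/(\varphi^+_1 + \epsilon u)$ is bounded on $\Omega$ for small $\epsilon$ (by the Hopf lemma for $\varphi^+_1$), dominated convergence allows sending $\delta \to 0^+$ and then $\epsilon \to 0^+$ to conclude $\int_\Omega f \varphi^*_\mu\, dx \leq 0$; maximizing over $\mu \in \dvmeas$ gives \eqref{eq:necessary}.

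For sufficiency my strategy is approximation and compactness. For $\epsilon > 0$, let $u_\epsilon \in W^{2,\infty}(\Omega)$ denote the unique solution of $F(D^2 u_\epsilon, Du_\epsilon, u_\epsilon, x) = (\lambda^+_1 - \epsilon) u_\epsilon + f$ in $\Omega$ with $u_\epsilon = 0$ on $\partial\Omega$, which exists because $\lambda^+_1 - \epsilon < \lambda^+_1(F,\Omega)$. If I can establish the uniform bound $\sup_\epsilon \|u_\epsilon\|_{L^\infty(\Omega)} < \infty$, then $C^{1,\alpha}$ compactness together with stability of viscosity solutions extracts a limit $u \in W^{2,\infty}(\Omega)$ solving \eqref{eq:DP}. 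Suppose for contradiction that $M_\epsilon := \|u_\epsilon\|_\infty \to \infty$ along a sequence. Passing to a subsequence, the normalized functions $v_\epsilon := u_\epsilon / M_\epsilon$ converge in $C^{1,\alpha}(\bar\Omega)$ to some $v$ with $\|v\|_\infty = 1$, $v = 0$ on $\partial\Omega$ and $F(D^2 v) = \lambda^+_1 v$. The assumption $\lambda^+_1 < \lambda^-_1$ forbids $v \leq 0$: if $v \leq 0$, then $-v \geq 0$ is a nonnegative eigenfunction of the convex conjugate operator $\check F(X, p, s, x) := -F(-X, -p, -s, x)$ at eigenvalue $\lambda^+_1$, whence by the strong maximum principle $-v > 0$ in $\Omega$; since $\lambda^+_1(\check F, \Omega) = \lambda^-_1$, this forces $\lambda^+_1 = \lambda^-_1$, contradicting \eqref{eq:plus-less-minus}. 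Hence $v > 0$ and by simplicity of the principal half-eigenvalue $v = \varphi^+_1/\|\varphi^+_1\|_\infty$; in particular $u_\epsilon > 0$ in $\Omega$ for all sufficiently small $\epsilon$.

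The final contradiction with $M_\epsilon \to \infty$ is to be obtained by combining two complementary asymptotic estimates. Applying the necessity inequality (just proved) to $u_\epsilon$ viewed as a solution of \eqref{eq:DP} with source $f - \epsilon u_\epsilon$ yields $\int_\Omega f \varphi^*_\mu\, dx \leq \epsilon \int_\Omega u_\epsilon \varphi^*_\mu\, dx$ for every $\mu \in \dvmeas$; meanwhile, using $\varphi = u_\epsilon + \eta$ as a test function in \eqref{eq:minimax-1} (legitimate in $\posfun$ after a standard smoothing since $u_\epsilon > 0$ in $\Omega$), invoking superadditivity to bound $F(D^2\varphi)$ from below, and letting $\eta \to 0^+$, furnishes the reverse-type bound $\int_\Omega f/u_\epsilon\, d\mu \leq \epsilon$. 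The uniform convergence $u_\epsilon/M_\epsilon \to \varphi^+_1/\|\varphi^+_1\|_\infty$ allows both sides to be computed to leading order, and I expect the strict version of \eqref{eq:sufficient} to force a sign inconsistency in the next-order term of the expansion $u_\epsilon = M_\epsilon v + r_\epsilon$, incompatible with $u_\epsilon > 0$ in $\Omega$. This last step --- extracting the correct sign of the $O(1)$ correction $r_\epsilon$ by controlling the linearization of $F$ at $\varphi^+_1$, whose solvability in turn uses the spectral gap $\lambda^+_1 < \lambda^-_1$ --- is the main technical obstacle of the argument, and is where the strict Fredholm-type condition plays its essential role.
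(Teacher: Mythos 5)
Your necessity argument is sound in spirit and is essentially the mechanism the paper uses: the paper first shows that a solution $u$ yields $f-\consz\ep\in\mathcal{S}(F,\Omega)$ by passing to the nonnegative function $u+k\varphi^+_1+\ep$, and then tests the defining inequality of $\dvmeas$ with $\varphi^+_1+s\varphi$, which is your $\varphi_{\ep,\delta}$ in different clothing. Two technical points to watch: the test functions in \EQ{minimax-1} are $C^2_+(\bar\Omega)$, while a solution of \EQ{DP} with $f\in C(\Omega)\cap L^n(\Omega)$ is only $W^{2,n}(\Omega)$, so you need the relaxed minimax of the remark following the proof of \THM{minimax-eigenvalue}; and your appeal to Hopf's lemma to bound $\varphi^+_1/(\varphi^+_1+\ep u)$ presupposes $|u|\le C\,d(x,\partial\Omega)$, which is not guaranteed for $f$ merely in $L^n$. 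The paper avoids both issues by keeping the additive constant $\ep$ and passing to the limit $s\to0$ with the uniform bound $\varphi^+_1/(\varphi^+_1+s\varphi)\le1$.

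The sufficiency argument has a genuine gap, and it sits exactly where you place it. After the blow-up analysis (which itself has a hole: ruling out $v\le 0$ via $\lambda^+_1<\lambda^-_1$ does not rule out a sign-changing limit $v$, so ``simplicity'' cannot yet be invoked), the two estimates you extract --- $\int_\Omega f\varphi^*_\mu\,dx\le\ep\int_\Omega u_\ep\varphi^*_\mu\,dx$ and $\int_\Omega f/u_\ep\,d\mu\le\ep$ --- are both upper bounds and are perfectly consistent with \EQ{sufficient} at leading order, since $\int f/u_\ep\,d\mu\approx M_\ep^{-1}\int f\varphi^*_\mu\,dx<0<\ep$. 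The contradiction must therefore come from the $O(1)$ correction $r_\ep=u_\ep-M_\ep v$, and the tool you propose for it --- solvability theory for ``the linearization of $F$ at $\varphi^+_1$'' with a Fredholm-type alternative --- does not exist for genuinely nonlinear $F$: the operator need not be differentiable at $\varphi^+_1$, and $\dvmeas$ can contain more than one measure (\EXAM{nonunique-measures}), which is precisely the failure of a classical Fredholm alternative. The paper takes an entirely different route: it introduces the convex cone $\mathcal{S}(F,\Omega)$ of right-hand sides admitting a nonnegative supersolution, identifies the dual cone with $\{c\varphi^*_\mu:\mu\in\dvmeas,\ c\ge0\}$ by Hahn--Banach separation (\PROP{Spcharacterization}), upgrades membership in the closure $\overline{\mathcal{S}}_\infty$ to membership in $\mathcal{S}$ using the strictness of \EQ{sufficient} (\PROP{DP-sufficient}), and uses the hypothesis $\lambda^-_1(F,\Omega)>\lambda^+_1(F,\Omega)$ only to produce the nonpositive subsolution needed for Perron's method (\PROP{Sfomega-and-solvability}). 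In short, the strict inequality enters through convex duality to build a supersolution, not through an asymptotic expansion of the approximate solutions; without that (or an equivalent substitute) your contradiction cannot be closed.
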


The hypothesis \EQ{plus-less-minus} is relatively generic for a nonlinear operator. For example, the Bellman operator $H$ given by \EQ{bellman} fails to satisfy \EQ{plus-less-minus} only if the linear operators belonging to the family $\{ L^k \}_{k\in A}$ share the same principal eigenvalue \emph{and} principal eigenfunction. See \cite[Remark II.6]{Lions:1983d} or \cite[Example 3.11]{Armstrong:2009}.

In the borderline case that
\begin{equation*}
\max_{\mu \in \dvmeas} \int_\Omega f\varphi^*_\mu\,dx =0,
\end{equation*}
solutions of \EQ{DP} may or may not exist; see \EXAM{nssn} below. We also wish to mention the work of Sirakov \cite{Sirakov:preprint}, who has studied the existence and nonuniqueness of solutions of \EQ{DP} in the case that $\lambda^+_1(F,\Omega) < \lambda < \lambda^-_1(F,\Omega)$.

\medskip

Soon after submitting this article for publication, I became aware of the recent, very interesting work of Felmer, Quaas, and Sirakov \cite{FQS:preprint}, which contains results similar to \THM{Dirichlet-at-resonance}. In particular, using topological techniques, they have shown that under assumption \EQ{plus-less-minus}, for each fixed function $h\in L^n(\Omega)$, there exists a number $t^* = t^*(h)$ such that the Dirichlet problem \EQ{DP} has a solution for $f := h - t \varphi^+_1$ provided that $t > t^*$, and has no solution if $t < t^*$. Using \THM{Dirichlet-at-resonance}, we see that $t^*$ can be expressed by
\begin{equation}\label{eq:tstar}
t^*(h) = \max_{\mu \in \dvmeas} \int_\Omega h \varphi^*_\mu \, dx.
\end{equation}
Moreover, in \cite{FQS:preprint} there is much information about the set of solutions along the curves $f_t = h - t\varphi^+_1$. In particular, results in \cite{FQS:preprint} together with \EQ{tstar} imply that if \EQ{plus-less-minus} and \EQ{sufficient} hold, a solution of \EQ{DP} is unique. The preprint \cite{FQS:preprint} also contains results for the Dirichlet problem in the case that $\lambda^+_1(F,\Omega) < \lambda \leq \lambda^-_1(F,\Omega)$, and to which my methods do not obviously apply.

\medskip

This paper is organized as follows. In \SEC{preliminaries} we state our hypotheses and review some known results for the principal half-eigenvalues of fully nonlinear operators. \SEC{mainresults} contains the proofs of \THM{minimax-eigenvalue} and \THM{Dirichlet-at-resonance}. In \SEC{examples} we give a sufficient condition for the set $\dvmeas$ of minimizing measures to be a singleton set, and study a couple of simple examples.

\medskip

This article was completed while I was Ph.D. student at the University of California, Berkeley. I wish to thank my advisor Lawrence C. Evans and the Department of Mathematics for their continual guidance and support. I would also like to acknowledge several interesting conversations with Isabeau Birindelli on this topic, and to thank Boyan Sirakov for sending me the preprint \cite{FQS:preprint}.


\section{Preliminaries}

\label{sec:preliminaries}

\subsection{Notation and Hypotheses}

Throughout this paper, we take $\Omega$ to be a bounded, smooth, and connected open subset of $\R^n$. Let $\Sy$ denote the set of $n$-by-$n$ real symmetric matrices. We denote the space of Radon measures on $\bar{\Omega}$ by $\radonmeas$, and the set of probability measures on $\bar{\Omega}$ by
\begin{equation*}
\probmeas = \left\{ \mu \in \radonmeas : \mu \geq 0 \ \mbox{and} \ \mu(\bar{\Omega}) = 1 \right\}.
\end{equation*}
For any $k \in \N$, set
\begin{equation*}
C^k_+(\bar{\Omega}) = \left\{ u \in C^k(\bar{\Omega}) : u > 0 \ \mbox{on} \ \bar{\Omega} \right\}.
\end{equation*}
For $r,s\in \R$, define $r \wedge s = \min\{r,s\}$ and $r \vee s = \max \{r,s\}$. If $r\in \R$, then we write $r^+ = r \vee 0$ and $r^- = - (r\wedge 0)$. For $M \in \Sy$ and $0 < \gamma \leq \Gamma$, define the uniformly elliptic operators
\begin{equation*}
\Puccisub{\gamma}{\Gamma} (M) = \sup_{A\in \llbracket\gamma,\Gamma\rrbracket} \left[ - \trace(AM) \right] \quad \mbox{and} \quad \puccisub{\gamma}{\Gamma} (M) = \inf_{A\in \llbracket\gamma,\Gamma\rrbracket} \left[ - \trace(AM) \right],
\end{equation*}
where the set $\llbracket\gamma,\Gamma\rrbracket \subseteq \Sy$ consists of the symmetric matrices the eigenvalues of which lie in the interval $[ \gamma, \Gamma]$. The nonlinear operators $\Puccisub{\gamma}{\Gamma}$ and $\puccisub{\gamma}{\Gamma}$ are the Pucci extremal operators.

\medskip

We impose the following standing assumptions. Our nonlinear operator $F$ is a function
\begin{equation*}
F:\Sy \times \R^n \times \R \times \bar{\Omega} \rightarrow \R
\end{equation*}
satisfying the following:
\renewcommand{\labelenumi}{(F\arabic{hypo})}
\begin{enumerate}
\usecounter{hypo}
\item For each $K > 0$, there exists a constant $B_K$ and a positive constant $\frac{1}{2}< \nu \leq 1$, depending on $K$, such that
\begin{equation*}
|F(M,p,z,x) - F(M,p,z,y) | \leq B_K |x-y|^\nu (|M|+1)
\end{equation*}
for all $M\in \Sy$, $p\in \R^n$, $z\in \R$, and $x,y\in \bar{\Omega}$ satisfying $|p|, |z| \leq K$. \label{hyp:Fcontinuous}
\item There exist constants $\consp,\consz>0$ and $0<\gamma \leq \Gamma$ such that
\begin{multline*}
\qquad \quad \puccisub{\gamma}{\Gamma}(M-N) - \consp |p-q| - \consz |z-w| \leq F(M,p,z,x) - F(N,q,w,x) \\ 
\leq \Puccisub{\gamma}{\Gamma}(M-N) + \consp |p-q| + \consz|z-w|
\end{multline*}
for all $M,N\in \Sy$, $p,q\in R^n$, $z,w \in \R$, $x\in \bar{\Omega}$.\label{hyp:Felliptic}
\item $F$ is positively homogeneous of order one, jointly in its first three arguments; i.e.,
\begin{equation*}
F(tM,tp,tz,x) = tF(M,p,z,x)\quad \mbox{for all} \quad t\geq 0
\end{equation*}
and all $M\in \Sy$, $p\in \R^n$, $z\in \R$, $x\in \bar{\Omega}$.\label{hyp:Fhomogeneous}
\item $F$ is concave jointly in its first three arguments; i.e., the map
\begin{equation*}
(M,p,z) \mapsto F(M,p,z,x) \quad \mbox{is concave}
\end{equation*}
for every $x \in \bar{\Omega}$.
\label{hyp:Fconcave}
\end{enumerate}
\renewcommand{\labelenumi}{(\roman{enumi})} 

\medskip

Hypothesis \HYP{Felliptic} implies $F$ is uniformly elliptic in the familiar sense that
\begin{equation*}
- \gamma \trace (N) \geq F(M+N,p,z,x) - F(M,p,z,x)
\end{equation*}
for every nonnegative definite matrix $N\in \Sy$. (In particular, we adopt the sign convention that regards $-\Delta$ as elliptic.)

Recall that a positively homogeneous function is concave if and only if it is superlinear. Thus \HYP{Fhomogeneous} and \HYP{Fconcave} imply that
\begin{equation}\label{eq:Fsuperlinear}
F(M+N,p+q,z+w,x) \geq F(M,p,z,x) + F(N,q,w,x)
\end{equation}
for all $M,n\in \Sy$, $p,q\in \R^n$, $z,w\in\R$ and $x\in\bar{\Omega}$. We may rewrite this as
\begin{equation} \label{eq:Fsuperlinearminus}
F(M-N,p-q,z-w,x) \leq F(M,p,z,x) - F(N,q,w,x),
\end{equation}
and in particular we have 
\begin{equation}\label{eq:Fsuperlinearneg}
F(M,p,z,x) \leq - F(-M,-p,-z,x).
\end{equation}
We may replace concavity with convexity in the hypothesis \HYP{Fconcave} and our results with still hold, provided that we make appropriate sign changes in our statements. This follows from the simple observation that if $G$ satisfies \HYP{Fcontinuous}-\HYP{Fhomogeneous} but is convex in $(M,p,z)$, then the operator
\begin{equation*}
\tilde{G}(M,p,z,x) := -G(-M,-p,-z,x)
\end{equation*}
satisfies \HYP{Fcontinuous}-\HYP{Fconcave}.
\medskip

All differential equations and inequalities, unless otherwise indicated, are assumed to be satisfied in the viscosity sense. See \cite{UsersGuide,Fleming:Book} for definitions and an introduction to the theory of viscosity solutions of second order elliptic equations.

We will make use of $W^{2,p}$ and $C^{2,\alpha}$ estimates for viscosity solutions of concave, second-order uniformly elliptic equations. See \cite{Caffarelli:Book,Winter:2008} for details.

\subsection{Principal half-eigenvalues}

We now review some known facts regarding principal eigenvalues of $F$. The results in this subsection were substantively reported in \cite{Lions:1983d}, and have recently been generalized in \cite{Armstrong:2009, Ishii:preprint,Quaas:2008}. While we continue to assume that our nonlinear operator $F$ is concave, most of our conclusions hold for operators satisfying only \HYP{Fcontinuous}-\HYP{Fhomogeneous}; see \cite{Armstrong:2009,Ishii:preprint} for details.

\medskip

The following comparison principle is essential to the theory of principal eigenvalues of nonlinear operators. It is based on an insight that goes back to the work of Berestycki, Nirenberg, and Varadhan \cite{Berestycki:1994}.

\begin{thm}[Comparison principle for positively homogeneous operators]\label{thm:BNV}
Suppose $u,v\in C(\bar{\Omega})$ and $f\in C(\Omega)$ satisfy
\begin{equation}\label{eq:teh}
F(D^2u, Du, u, x) \leq f \leq F(D^2v,Dv,v,x) \quad \mbox{in} \ \Omega,
\end{equation}
and that one of the following conditions holds:
\begin{enumerate}
\item \label{enum:teh-1}
$f \leq 0$ and $u < 0$ in $\Omega$, $v \geq 0$ on $\partial \Omega$, \ \mbox{or}
\item \label{enum:teh-2}
$f \geq 0$ and $v > 0$ in $\Omega$, $u \leq 0$ on $\partial \Omega$.
\end{enumerate}
Then either $u \leq v$ in $\Omega$, or $v\equiv tu$ for some positive constant $t\neq 1$.
\end{thm}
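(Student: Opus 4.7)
My plan is to prove case (\ENUM{teh-1}) by a scaling argument in the spirit of Berestycki-Nirenberg-Varadhan; case (\ENUM{teh-2}) follows symmetrically (alternatively, it reduces to (\ENUM{teh-1}) via the convex-to-concave transformation $\tilde F(M,p,z,x) := -F(-M,-p,-z,x)$ applied to $(-v,-u,-f)$). Suppose, for contradiction, that $u \not\leq v$ in $\Omega$ and that $v$ is not of the form $tu$ for any $t>0$ with $t\neq 1$. Since $u<0$ in $\Omega$ and $v\geq 0$ on $\partial\Omega$, the set $\{\tau>0 : v\geq \tau u\text{ in }\Omega\}$ is nonempty (it contains all sufficiently large $\tau$), so I define
\[
\tau^* := \inf\{\tau>0 : v\geq \tau u\text{ in }\Omega\}.
\]
By continuity, $v\geq \tau^* u$ on $\bar\Omega$ with equality at some point $x_0\in\bar\Omega$, and the standing assumption $u\not\leq v$ forces $\tau^*>1$ (otherwise $v\geq \tau^* u\geq u$, as $u<0$ and $\tau^*\leq 1$).

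The crux is that $w := v - \tau^* u$, which is nonnegative with $w(x_0)=0$, will satisfy a linear Pucci-type differential inequality. Applying \HYP{Felliptic} with arguments $(D^2 v, Dv, v)$ and $(\tau^* D^2 u, \tau^* Du, \tau^* u)$, then \HYP{Fhomogeneous},
\[
F(D^2 v, Dv, v, x) - \tau^* F(D^2 u, Du, u, x) \leq \Puccisub{\gamma}{\Gamma}(D^2 w) + \consp |Dw| + \consz w.
\]
The left-hand side is bounded below, via \EQ{teh}, by $f - \tau^* f = (1-\tau^*) f$, and this quantity is nonnegative in case (\ENUM{teh-1}) since $\tau^* > 1$ and $f\leq 0$. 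Hence
\[
\Puccisub{\gamma}{\Gamma}(D^2 w) + \consp |Dw| + \consz w \geq 0 \quad\text{in }\Omega
\]
in the viscosity sense.

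Now I invoke the strong maximum principle together with the Hopf boundary point lemma for viscosity subsolutions of Pucci operators with bounded first- and zeroth-order terms (cf.\ \cite{Caffarelli:Book}): either $w\equiv 0$ in $\Omega$, or $w>0$ in $\Omega$ with a strict inward normal derivative at each boundary zero. In the former case, $v\equiv \tau^* u$ with $\tau^*>1\neq 1$, which is the excluded form and yields a contradiction. In the latter case the touching point $x_0$ must lie on $\partial\Omega$, whereupon $v\geq 0\geq \tau^* u$ on $\partial \Omega$ forces $v(x_0)=u(x_0)=0$; the strict Hopf inequality on $w$, together with the vanishing of $u$ and $v$ at $x_0$, then allows a small decrement $\tau'<\tau^*$ for which $v\geq \tau' u$ still holds in $\Omega$, contradicting the minimality of $\tau^*$.

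The main obstacle I anticipate is justifying the Pucci inequality on $w$ in the viscosity sense, since $u$ and $v$ are only continuous and differential inequalities cannot be subtracted pointwise; the standard remedy is sup-/inf-convolution regularization or a doubling-of-variables argument, both of which preserve viscosity inequalities and allow \HYP{Felliptic} to be applied to the resulting smooth test functions. A secondary technical point is citing the Hopf lemma in the appropriate viscosity formulation with lower order terms, which is standard but worth being explicit about.
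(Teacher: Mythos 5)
The paper gives no proof of \THM{BNV} at all---it simply cites \cite{Armstrong:2009}---so there is no in-paper argument to compare against; your sliding construction is the standard Berestycki--Nirenberg--Varadhan paradigm and is indeed the one used in the cited reference. The interior mechanics of your sketch are sound: the passage from \HYP{Felliptic} and \HYP{Fhomogeneous} to $\Puccisub{\gamma}{\Gamma}(D^2w)+\consp|Dw|+\consz w\geq(1-\tau^*)f\geq 0$ for $w=v-\tau^*u$, the sign analysis showing $\tau^*>1$, the strong minimum principle for nonnegative supersolutions of the maximal Pucci operator (via the weak Harnack inequality), and the viscosity-theoretic caveat about subtracting inequalities (resolved by Ishii's lemma or sup-/inf-convolution) are all correct and standard.

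The genuine gaps are both at the boundary, which is precisely where the content of this theorem lies. First, the parenthetical claim that $\{\tau>0:v\geq\tau u\ \mbox{in}\ \Omega\}$ ``contains all sufficiently large $\tau$'' is not justified by the sign conditions alone: on $\{v<0\}$ the condition reads $\tau\geq(-v)/(-u)$, and since $u$ is only negative in the open set $\Omega$ (it may vanish on $\partial\Omega$, as an eigenfunction does) while $v$ may be negative arbitrarily close to $\partial\Omega$, this ratio can a priori be unbounded. Ruling that out requires the equations: a Hopf-type lower bound $-u\geq c\,d(x)$ ($d$ the distance to $\partial\Omega$), legitimate because $-u$ is a positive supersolution of the dual operator with nonnegative right-hand side, together with an upper bound on $-v$ near $\partial\Omega$ obtained by a barrier for the supersolution $v$---itself delicate since $f$ is only assumed continuous on $\Omega$, so one must exploit $F(D^2v,Dv,v,x)\geq f\geq F(D^2u,Du,u,x)$ rather than a uniform bound on $f$. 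Second, the final decrement does not follow from Hopf's lemma on $w$ alone: to find $\tau'<\tau^*$ with $v\geq\tau'u$ you need $w\geq(\tau^*-\tau')(-u)$ throughout $\Omega$, whereas Hopf only gives $w\geq c\,d(x)$ near $x_0$; because $u$ is merely a subsolution there is no a priori upper bound $-u\leq C\,d(x)$, so $-u$ may vanish at $x_0$ more slowly than linearly and linear growth of $w$ does not dominate $\ep(-u)$. Closing this step requires comparing $w$ to $-u$ (not to $d$) near $\partial\Omega$, i.e.\ showing that $\inf_\Omega w/(-u)$ cannot be attained only in the limit at the boundary---again via the boundary growth estimates above. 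These two boundary steps are the real work behind \THM{BNV}; your sketch asserts rather than proves them.
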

See \cite{Armstrong:2009} for a proof of \THM{BNV}.

\begin{thm}[See Ishii and Yoshimura \cite{Ishii:preprint} or Quaas and Sirakov \cite{Quaas:2008}]\label{thm:eigenvalues}
There exist functions $\varphi^+_1,\varphi^-_1 \in C^{2,\alpha}(\Omega)$ such that $\varphi^+_1 > 0$ and $\varphi^-_1<0$ in $\Omega$, and which satisfy
\begin{equation}\label{eq:Feigenvalues}
\left\{ \begin{aligned}
& F(D^2\varphi_1^+,D\varphi_1^+,\varphi_1^+,x) = \lambda_1^+(F,\Omega) \varphi_1^+ & \mbox{in} & \ \Omega, \\
& F(D^2\varphi_1^-,D\varphi_1^-,\varphi_1^-,x) = \lambda_1^-(F,\Omega) \varphi_1^- & \mbox{in} & \ \Omega, \\
& \varphi_1^+ = \varphi^-_1 = 0 & \mbox{on} & \ \partial \Omega. 
\end{aligned}\right.
\end{equation}
Moreover, the eigenvalue $\lambda_1^+(F,\Omega)$ ($\lambda_1^-(F,\Omega)$) is unique in the sense that if $\rho$ is another eigenvalue of $F$ in $\Omega$ associated with a nonnegative (nonpositive) eigenfunction, then $\rho=\lambda_1^+(F,\Omega)$ ($\rho=\lambda_1^-(F,\Omega)$); and is simple in the sense that if $\varphi \in C(\bar{\Omega})$ is a solution of \EQ{Feigenvalues} with $\varphi$ in place of $\varphi^+_1$ ($\varphi^-_1$), then $\varphi$ is a constant multiple of $\varphi_1^+$ ($\varphi_1^-$).
\end{thm}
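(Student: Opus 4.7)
The plan is to establish existence by the Berestycki--Nirenberg--Varadhan variational characterization, adapted here to nonlinear operators, and then to extract uniqueness and simplicity as consequences of the comparison principle \THM{BNV}. Define
\begin{equation*}
\lambda^+_1(F,\Omega) := \sup\bigl\{ \lambda \in \R : \exists \varphi \in C(\bar\Omega),\ \varphi > 0 \text{ in } \Omega,\ F(D^2\varphi,D\varphi,\varphi,x) \geq \lambda\varphi \text{ in } \Omega \bigr\}.
\end{equation*}
The constant function $\varphi \equiv 1$ shows the set is nonempty, with $\lambda = -\consz$ admissible: by \HYP{Fhomogeneous} we have $F(0,0,0,x) = 0$, and \HYP{Felliptic} then yields $F(0,0,1,x) \geq -\consz$. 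The upper bound $F(D^2\varphi,D\varphi,\varphi,x) \leq \Puccisub{\gamma}{\Gamma}(D^2\varphi) + \consp|D\varphi| + \consz\varphi$, also from \HYP{Felliptic}, confines $\lambda^+_1(F,\Omega)$ below the principal eigenvalue of the linear majorant, which is finite on the bounded domain $\Omega$.

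For each $\lambda < \lambda^+_1(F,\Omega)$, solve the Dirichlet problem
\begin{equation*}
F(D^2 u_\lambda, Du_\lambda, u_\lambda, x) - \lambda u_\lambda = -1 \ \text{in } \Omega, \qquad u_\lambda = 0 \ \text{on } \partial\Omega,
\end{equation*}
by Perron's method combined with \THM{BNV}\ENUM{teh-2} applied to the shifted operator $(M,p,z,x) \mapsto F(M,p,z,x) - \lambda z$, which remains positively homogeneous, concave, and uniformly elliptic; the strict inequality $\lambda < \lambda^+_1(F,\Omega)$ is precisely what guarantees the comparison principle for this shifted operator. The strong minimum principle gives $u_\lambda > 0$ in $\Omega$. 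Set $M_\lambda := \|u_\lambda\|_{L^\infty(\Omega)}$ and $\varphi_\lambda := u_\lambda/M_\lambda$, so that
\begin{equation*}
F(D^2\varphi_\lambda,D\varphi_\lambda,\varphi_\lambda,x) = \lambda\varphi_\lambda - 1/M_\lambda \ \text{in } \Omega, \qquad \|\varphi_\lambda\|_{L^\infty(\Omega)} = 1.
\end{equation*}

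A blow-up argument shows $M_\lambda \to \infty$ as $\lambda \uparrow \lambda^+_1$: otherwise uniform $C^{2,\alpha}(\bar\Omega)$ bounds --- from the Krylov--Safonov Harnack inequality, the Caffarelli--Evans--Krylov interior estimates for concave fully nonlinear equations (where \HYP{Fconcave} is essential), and global boundary regularity in the smooth domain $\Omega$ --- would produce a positive limit $u$ with $F(D^2u,Du,u,x) - \lambda^+_1 u = -1$, and then any small positive multiple of $u$ would be admissible in the definition of $\lambda^+_1$ for some $\lambda > \lambda^+_1$, a contradiction. The same compactness now applies to $\{\varphi_\lambda\}$: extract a $C^2$-convergent subsequence with limit $\varphi^+_1 \in C^{2,\alpha}(\bar\Omega)$, nonnegative, of sup-norm $1$, vanishing on $\partial\Omega$, solving the first equation in \EQ{Feigenvalues}, and strictly positive in $\Omega$ by the strong minimum principle. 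Uniqueness and simplicity then follow from \THM{BNV}: if a nonnegative eigenfunction $\varphi$ is associated with some $\rho$, compare appropriate positive scalar multiples of $\varphi^+_1$ and $\varphi$ (chosen to be tangential from above or from below) to force $\rho = \lambda^+_1$ and $\varphi \equiv t\varphi^+_1$ for some $t > 0$. The pair $(\lambda^-_1,\varphi^-_1)$ is produced symmetrically: solve the analogous problem with right-hand side $+1$ for a negative $u_\lambda$, using \THM{BNV}\ENUM{teh-1} in place of \ENUM{teh-2}.

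The main obstacle is the blow-up step together with the compactness argument that follows. It rests on (a) interior and up-to-the-boundary $C^{2,\alpha}$ regularity for concave uniformly elliptic fully nonlinear equations in smooth domains, where \HYP{Fconcave} is indispensable (a convex operator would require the symmetric argument applied to $\tilde F$); and (b) verifying that Perron's method is available for the shifted operator $F - \lambda$ when $\lambda < \lambda^+_1$, which itself rests on \THM{BNV}. Once these are in place, the existence of $(\lambda^+_1,\varphi^+_1)$ emerges, and both uniqueness and simplicity follow cleanly from \THM{BNV}.
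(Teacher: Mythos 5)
This theorem is not proved in the paper at all --- it is imported from \cite{Ishii:preprint} and \cite{Quaas:2008} --- so your proposal has to be measured against the standard literature argument. In outline you follow it correctly: the Berestycki--Nirenberg--Varadhan sup characterization, the resolvent blow-up as $\lambda \nearrow \lambda^+_1$, compactness from concave fully nonlinear regularity theory, and uniqueness/simplicity from \THM{BNV}. This is also the mechanism the paper itself uses for the closely related \PROP{quantify-MP-lambda}.

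There is, however, a concrete sign error that breaks the argument as written. Under the paper's convention ($-\Delta$ is elliptic; compare \PROP{quantify-MP-lambda}, where $f \geq 0$ produces a \emph{nonnegative} solution of $F = \lambda u + f$), the problem $F(D^2u_\lambda,Du_\lambda,u_\lambda,x) - \lambda u_\lambda = -1$ with zero boundary data has a \emph{nonpositive} solution: since $F(0,0,0,x)=0 \geq -1$, the zero function is a supersolution, comparison gives $u_\lambda \leq 0$, and the strong maximum principle gives $u_\lambda < 0$ in $\Omega$. So your claim that the strong minimum principle yields $u_\lambda>0$ is false, and the blow-up contradiction collapses: if the bounded limit satisfied $F(D^2u,Du,u,x) = \lambda^+_1 u - 1$, then by \HYP{Fhomogeneous} every multiple $tu$ satisfies $F(D^2(tu),\dots) = \lambda^+_1 (tu) - t$, and admissibility for some $\lambda > \lambda^+_1$ would require $-1 \geq (\lambda - \lambda^+_1)u > 0$, which is impossible --- no positive multiple of $u$ helps. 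The fix is simply to flip the forcing: take right-hand side $+1$ for the positive half-eigenvalue, so that $F(D^2u_\lambda,\dots) = \lambda u_\lambda + 1 > \lambda u_\lambda$ makes $u_\lambda$ a strict positive supersolution and the bounded limit would satisfy $F(D^2u,\dots) \geq \bigl(\lambda^+_1 + \|u\|_{L^\infty(\Omega)}^{-1}\bigr)u$, the desired contradiction; and take $-1$ for the negative half-eigenvalue (you have these swapped in your last sentence too). Two smaller points: the ``linear majorant'' $\Puccisub{\gamma}{\Gamma}(D^2\varphi)+\consp|D\varphi|+\consz\varphi$ is not linear (it is the Pucci operator plus lower-order terms --- the correct finiteness bound is \LEM{eigenvalues-est-SD}); and for Perron's method you must actually exhibit a positive strict supersolution of $F-\lambda$ from the sup definition of $\lambda^+_1$ rather than from \LEM{positive-supersolution}, whose construction already uses $\varphi^+_1$ and would be circular here.
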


The principal eigenvalues $\lambda^\pm_1(F,\Omega)$ satisfy the maximin formulas
\begin{equation}\label{eq:maximin-formula-2}
\lambda^+_1(F,\Omega) = \sup_{\varphi\in C_+^2(\Omega)} \inf_{x\in \Omega} \frac{F(D^2\varphi(x),D\varphi(x),\varphi(x),x)}{\varphi(x)},
\end{equation}
and
\begin{equation}\label{eq:maximin-formula-3}
\lambda^-_1(F,\Omega) = \sup_{ \varphi\in C^2_+(\Omega)} \inf_{x\in \Omega} \frac{-F(-D^2\varphi(x),-D\varphi(x),-\varphi(x),x)}{\varphi(x)}.
\end{equation}
Along with \EQ{Fsuperlinearneg}, these imply that
\begin{equation*}
\lambda^+_1(F,\Omega) \leq \lambda^-_1(F,\Omega).
\end{equation*}
If $\Omega' \subsetneq \Omega$, then we can compare the principal eigenfunctions of $F$ on the domains $\Omega'$ and $\Omega$ and employ \THM{BNV} to immediately conclude that
\begin{equation}\label{eq:eigenvalues-monotonic-in-domain}
\lambda^\pm_1(F,\Omega) < \lambda^\pm(F,\Omega').
\end{equation}

We will need the following extension of the Alexandroff-Bakelman-Pucci inequality, which was essentially proven in the convex case by Quaas and Sirakov \cite{Quaas:2008}. 

\begin{thm}[ABP Inequality] \label{thm:ABP-eigen}
There exists a constant $C_1$, depending only only on $\Omega$, $n$, $\gamma$, $\Gamma$, and $\consp$, such that for any $\lambda < \lambda^+_1(F,\Omega)$, $f\in C(\Omega) \cap L^n(\Omega)$, and any subsolution $u\in C(\bar{\Omega})$ of
\begin{equation*}
F(D^2u,Du,u,x) \leq \lambda u + f \quad \mbox{in} \quad \{ u > 0 \},
\end{equation*}
we have the estimate
\begin{equation}\label{eq:ABP-eigen}
\sup_\Omega u^+ \leq C_1 \left( 1 + ( \lambda^+_1(F,\Omega) - \lambda )^{-1} \right) \left( \sup_{\partial \Omega} u^+ + \|f^+ \|_{L^n(\Omega)} \right).
\end{equation}
Likewise, for any $\lambda < \lambda^-_1(F,\Omega)$, $f\in C(\Omega) \cap L^n(\Omega)$, and any supersolution $u\in C(\bar{\Omega})$ of
\begin{equation*}
F(D^2u,Du,u,x) \geq \lambda u + f \quad \mbox{in} \quad \{ u < 0 \},
\end{equation*}
we have the estimate
\begin{equation}\label{eq:ABP-eigen-minus}
\sup_\Omega u^- \leq C_1 \left( 1 + ( \lambda^-_1(F,\Omega) - \lambda )^{-1} \right) \left( \sup_{\partial \Omega} u^- + \|f^- \|_{L^n(\Omega)} \right).
\end{equation}
\end{thm}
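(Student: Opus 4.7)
The two inequalities \eqref{eq:ABP-eigen} and \eqref{eq:ABP-eigen-minus} are related by the duality $u\mapsto -u$, $F\mapsto\tilde F(M,p,z,x):=-F(-M,-p,-z,x)$: a supersolution of $F$ on $\{u<0\}$ becomes a subsolution of $\tilde F$ on $\{-u>0\}$, $\tilde F$ is convex in place of concave, and one checks $\lambda^+_1(\tilde F,\Omega)=\lambda^-_1(F,\Omega)$. Since the proof I sketch uses only positive homogeneity and uniform ellipticity, it carries over to convex operators, and \eqref{eq:ABP-eigen-minus} thus follows from \eqref{eq:ABP-eigen} applied to $\tilde F$. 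I therefore concentrate on the first inequality.

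My plan combines two ingredients. The first is the classical Alexandrov--Bakelman--Pucci estimate for positively homogeneous operators satisfying the maximum principle: if $G$ satisfies \HYP{Fcontinuous}--\HYP{Fhomogeneous} with $\lambda^+_1(G,\Omega)>0$, then any viscosity subsolution $w\in C(\bar\Omega)$ of $G(D^2w,Dw,w,x)\le g$ in $\{w>0\}$ satisfies
$$\sup_\Omega w^+\le\sup_{\partial\Omega}w^+ + C_0\|g^+\|_{L^n},$$
with $C_0$ depending only on $n,\gamma,\Gamma,\consp,\consz,\diam\Omega$. Applied to $G:=F(\cdot)-\lambda(\cdot)$, this yields \eqref{eq:ABP-eigen} immediately whenever $\lambda^+_1(F,\Omega)-\lambda$ is bounded below by a fixed structural constant, and in particular when $\lambda\le 0$: the prefactor $(1+(\lambda^+_1-\lambda)^{-1})$ is then absorbed into the constant.

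The second ingredient addresses the regime of $\lambda$ approaching $\lambda^+_1(F,\Omega)$, where the factor $(\lambda^+_1(F,\Omega)-\lambda)^{-1}$ is the essential content of the estimate. By the domain monotonicity \eqref{eq:eigenvalues-monotonic-in-domain} and continuity of $\lambda^+_1$ under smooth domain perturbation, I fix a smooth domain $\Omega'$ with $\Omega\subset\subset\Omega'$ and
$$\lambda^+_1(F,\Omega')=\tfrac12\bigl(\lambda+\lambda^+_1(F,\Omega)\bigr),$$
so that the spectral gap satisfies $\lambda^+_1(F,\Omega')-\lambda=\tfrac12(\lambda^+_1(F,\Omega)-\lambda)>0$. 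The corresponding eigenfunction $\varphi$ is strictly positive on $\bar\Omega$, with $\min_{\bar\Omega}\varphi$ depending only on $\Omega,\Omega'$ and the structural data. Following Quaas--Sirakov \cite{Quaas:2008}, the weighted change of variables $u=v\varphi$, performed in the viscosity sense, should convert the subsolution inequality $F(D^2u,Du,u,x)\le\lambda u+f$ in $\{u>0\}$ into a new inequality of the form
$$F^\varphi(D^2v,Dv,x)+\bigl(\lambda^+_1(F,\Omega')-\lambda\bigr)v\le f/\varphi\quad\mbox{in}\ \{v>0\},$$
where $F^\varphi$ is a positively homogeneous, uniformly elliptic operator without zero-order dependence, whose structural constants depend only on those of $F$ and on $\|D\log\varphi\|_{L^\infty(\bar\Omega)}$. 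Because the zero-order coefficient $\lambda^+_1(F,\Omega')-\lambda$ is now a strictly positive constant, a Stampacchia-type iteration of the classical ABP applied on level sets $\{v>\tau\}$ produces
$$\sup_\Omega v^+\le C\bigl(\lambda^+_1(F,\Omega)-\lambda\bigr)^{-1}\bigl(\sup_{\partial\Omega}v^+ + \|f/\varphi\|_{L^n}\bigr),$$
and reverting via $u=v\varphi$, combined with the bounded-gap case, yields \eqref{eq:ABP-eigen}.

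The main obstacle is justifying the weighted substitution $u=v\varphi$ in the viscosity framework for a fully nonlinear $F$: the linear case is a direct chain-rule computation, but here one must verify carefully that touching test functions for $v$ lift to $\varphi$ times touching test functions for $u$ and that the new operator $F^\varphi$ inherits \HYP{Fcontinuous}--\HYP{Fconcave} with constants independent of $\lambda$. A clean route is to exploit the Bellman representation $F(M,p,z,x)=\inf_{\alpha\in A}\{-\trace(A^\alpha M)+b^\alpha\cdot p+c^\alpha z\}$ afforded by \HYP{Fhomogeneous} and \HYP{Fconcave}: apply the standard linear substitution to each $L^\alpha$ and recombine by taking the infimum. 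The uniform ellipticity provided by \HYP{Felliptic} ensures that the resulting structural constants for $F^\varphi$ are controlled independently of both $\alpha$ and $\lambda$.
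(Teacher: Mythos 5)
Your reduction of \EQ{ABP-eigen-minus} to \EQ{ABP-eigen} via $\tilde F(M,p,z,x):=-F(-M,-p,-z,x)$ is exactly what the paper does. The problem is with your second ingredient, and it is a genuine gap rather than a technicality. You choose the enlarged domain $\Omega\subset\subset\Omega'$ so that $\lambda^+_1(F,\Omega')=\tfrac12\bigl(\lambda+\lambda^+_1(F,\Omega)\bigr)$; but then $\Omega'$ depends on $\lambda$, and as $\lambda\nearrow\lambda^+_1(F,\Omega)$ the strict monotonicity \EQ{eigenvalues-monotonic-in-domain} forces $\Omega'$ to shrink down to $\Omega$. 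Consequently the eigenfunction $\varphi$ of $\Omega'$ vanishes on $\partial\Omega'$, which approaches $\partial\Omega$, so (after normalization) $\min_{\bar\Omega}\varphi\to0$ and $\|D\log\varphi\|_{L^\infty(\bar\Omega)}\to\infty$. Your claim that $\min_{\bar\Omega}\varphi$ and the structural constants of $F^\varphi$ are independent of $\lambda$ is therefore false, and these are precisely the constants through which the blow-up enters: after your substitution the zeroth-order coefficient $\kappa=\lambda^+_1(F,\Omega')-\lambda>0$ makes the transformed operator proper, so the classical ABP gives $\sup_\Omega v^+\le\sup_{\partial\Omega}v^++C\|f^+/\varphi\|_{L^n}$ with \emph{no} factor of $\kappa^{-1}$ at all; the entire rate $(\lambda^+_1(F,\Omega)-\lambda)^{-1}$ must then come from the degeneration of $\varphi$ on $\bar\Omega$, which you have not estimated and which there is no reason to expect to occur at exactly this rate. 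A related problem affects your first ingredient: an ABP estimate with constant depending only on $n,\gamma,\Gamma,\consp,\consz,\diam\Omega$ for every positively homogeneous $G$ with $\lambda^+_1(G,\Omega)>0$ cannot hold (apply it to $G=F-\lambda$ and let $\lambda\nearrow\lambda^+_1(F,\Omega)$; by \PROP{quantify-MP-lambda} the solutions $\varphi^{\lambda,f}$ blow up while $\|f\|_{L^n(\Omega)}$ stays fixed), and the genuinely classical ABP requires $G$ to be proper, i.e.\ roughly $\lambda\le-\consz$ rather than merely $\lambda\le0$.

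The paper takes a different, and in the end necessary, route: it builds an explicit positive supersolution on $\Omega$ itself with quantified sup-norm. In \LEM{positive-supersolution} one sets $v=1+\beta w+A\varphi^+_1$, where $w$ solves a Pucci problem whose right-hand side is supported in a fixed thin boundary layer (so $\|w\|_{L^\infty(\Omega)}$ is small by the classical ABP) and $A\sim(\lambda^+_1(F,\Omega)-\lambda)^{-1}$ multiplies the principal eigenfunction of $\Omega$, whose positive lower bound on a fixed compact subset is a fixed Harnack constant. This yields $F(D^2v,Dv,v,x)\ge\lambda v+1$ together with $\sup_\Omega v\le C\bigl(1+(\lambda^+_1(F,\Omega)-\lambda)^{-1}\bigr)$, and the theorem follows by comparing $u+w$ against a multiple of $v$ via \THM{BNV}, where $w$ absorbs $f^+$ and the boundary data. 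If you want to rescue your scheme you would have to fix $\Omega'$ independently of $\lambda$, and you would then still face the degeneration inside $\Omega$ as $\lambda\to\lambda^+_1(F,\Omega)$; the supersolution of \LEM{positive-supersolution} is precisely the device that quantifies it.
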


Our analysis in this paper relies crucially on the dependence of the right side of \EQ{ABP-eigen} on $\lambda^+_1(F,\Omega) - \lambda$. Because the proof given by Quaas and Sirakov in \cite{Quaas:2008} requires subtle modifications to achieve this dependence, and for the sake of completeness, we give a complete proof of \THM{ABP-eigen} in \APP{app}.

\begin{rem}\label{rem:comparison}
Notice that the operator $F(D^2u,Du,u,x) - \lambda u$ satisfies the comparison principle in $\Omega$ if and only if $\lambda < \lambda^+_1(F,\Omega)$. Indeed, suppose $u$ and $v$ satisfy
\begin{equation*}
\left\{ \begin{aligned}
& F(D^2u,Du,u,x) - \lambda u \leq F(D^2v,Dv,v,x) - \lambda v & \mbox{in} & \ \Omega, \\
& u \leq v & \mbox{on} & \ \partial \Omega.
\end{aligned}\right.
\end{equation*}
Set $w:=u-v$ and use \EQ{Fsuperlinearminus} to get
\begin{equation*}
F(D^2w,Dw,w,x) -\lambda w \leq 0 \quad \mbox{in} \ \Omega.
\end{equation*}
Now recall that $w \leq 0$ on $\partial \Omega$, and employ \THM{ABP-eigen} to conclude that $w\leq 0$ in $\Omega$ in the case that $\lambda < \lambda^+_1(F,\Omega)$. For $\lambda \geq \lambda^+_1(F,\Omega)$, the principal eigenfunction $\varphi^+_1$ is a witness to the failure of the comparison principle for the operator $F-\lambda$ in $\Omega$.
\end{rem}

\begin{prop}\label{prop:quantify-MP-lambda}
Suppose that $\lambda < \lambda^+_1(F,\Omega)$ and $f\in C(\Omega) \cap L^n(\Omega)$. Then there is a unique viscosity solution $\varphi^{\lambda,f}\in C(\bar{\Omega})$ of the Dirichlet problem
\begin{equation}\label{eq:quantify-MP-lambda}
\left\{ \begin{aligned}
& F(D^2\varphi^{\lambda,f},D\varphi^{\lambda,f},\varphi^{\lambda,f},x) =  \lambda \varphi^{\lambda,f} + f & \mbox{in} & \ \Omega, \\
& \varphi^{\lambda,f} = 0 & \mbox{on} & \ \partial \Omega.
\end{aligned}\right.
\end{equation}
Moreover, if $f \geq 0$ in $\Omega$ and $f \not \equiv 0$, then
\begin{equation}\label{eq:varphi-lambda-f-blowup}
\lim_{\lambda \nearrow \lambda^+_1(F,\Omega)} \sup_\Omega \left| \varphi^{\lambda,f} \right| = +\infty,
\end{equation}
and the normalized function $\tilde{\varphi}^{\lambda,f} :=\varphi^{\lambda,f} / \| \varphi^{\lambda,f} \|_{L^\infty(\Omega)}$ converges uniformly on $\bar{\Omega}$ to the positive principal eigenfunction $\efunpos$ of $F$ in $\Omega$, as $\lambda \to \lambda^+_1(F,\Omega)$.
\end{prop}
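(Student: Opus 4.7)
The statement combines four claims: uniqueness, existence, blow-up, and convergence of the normalization. Uniqueness is immediate from the comparison principle in \REM{comparison}: for two solutions $u_1, u_2$, the superlinearity \EQ{Fsuperlinearminus} implies that $w := u_1 - u_2$ satisfies $F(D^2 w, Dw, w, x) \leq \lambda w$ in $\Omega$ with zero boundary data, so $u_1 \leq u_2$, and by symmetry equality holds. Existence is standard for this class of operators; I would apply Perron's method to $F(\cdot) - \lambda \cdot$, using the ABP estimate \THM{ABP-eigen} for the a priori bound, the comparison principle of \REM{comparison} for closure of the Perron construction, and boundary barriers obtained from the smoothness of $\partial \Omega$ and uniform ellipticity. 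A continuity method in $\lambda$ starting from very negative values (where strict monotonicity of $F - \lambda \cdot$ in $u$ makes existence classical) is an equally clean alternative.

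For the blow-up \EQ{varphi-lambda-f-blowup}, I would argue by contradiction. If $\|\varphi^{\lambda, f}\|_{L^\infty(\Omega)}$ stays bounded as $\lambda \nearrow \lambda^+_1(F, \Omega)$, interior $C^{2, \alpha}$ and boundary H\"older estimates for concave uniformly elliptic equations extract a subsequential uniform limit $u \in C(\bar \Omega)$ solving $F(D^2 u, Du, u, x) = \lambda^+_1(F, \Omega) u + f$ in $\Omega$ with $u = 0$ on $\partial \Omega$; comparison with $0$ (a subsolution since $f \geq 0$) gives $u \geq 0$. I would then set $t^* := \inf\{t > 0 : t \efunpos \geq u \text{ in } \Omega\}$. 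The case $t^* = 0$ yields $u \equiv 0$ and hence $f \equiv 0$, a contradiction. For $t^* > 0$, the superlinearity \EQ{Fsuperlinearminus} makes $w := t^* \efunpos - u \geq 0$ satisfy
\begin{equation*}
F(D^2 w, Dw, w, x) \leq \lambda^+_1(F, \Omega) w - f \quad \mbox{in} \ \Omega.
\end{equation*}
The strong maximum principle for viscosity subsolutions then yields either $w \equiv 0$ (in which case $u = t^* \efunpos$ conflicts with $f \not\equiv 0$) or $w > 0$ in $\Omega$; in the latter case the Hopf boundary point lemma, combined with compactness in the interior, produces some $t < t^*$ still satisfying $t \efunpos \geq u$ in $\Omega$, contradicting the minimality of $t^*$.

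The convergence of the normalization then follows from blow-up via compactness. Setting $v^\lambda := \varphi^{\lambda, f}/\|\varphi^{\lambda, f}\|_{L^\infty(\Omega)}$ and using positive homogeneity \HYP{Fhomogeneous},
\begin{equation*}
F(D^2 v^\lambda, D v^\lambda, v^\lambda, x) = \lambda v^\lambda + \frac{f}{\|\varphi^{\lambda, f}\|_{L^\infty(\Omega)}} \quad \mbox{in} \ \Omega, \quad v^\lambda = 0 \quad \mbox{on} \ \partial \Omega.
\end{equation*}
The source term tends to $0$ in $L^n(\Omega)$ by the just-proved \EQ{varphi-lambda-f-blowup}, and $\|v^\lambda\|_{L^\infty(\Omega)} = 1$ with $v^\lambda \geq 0$. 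Interior $C^{2, \alpha}$ and boundary H\"older estimates yield subsequential uniform limits $v \in C(\bar \Omega)$ solving $F(D^2 v, Dv, v, x) = \lambda^+_1(F, \Omega) v$ in $\Omega$, $v = 0$ on $\partial \Omega$, $v \geq 0$, and $\|v\|_{L^\infty(\Omega)} = 1$. The simplicity of $\lambda^+_1(F, \Omega)$ recorded in \THM{eigenvalues} forces $v = c \efunpos$ for a uniquely determined $c > 0$, so the full family converges. The delicate step is the blow-up: it relies on the strong maximum principle and the Hopf boundary point lemma for viscosity subsolutions of the concave, uniformly elliptic operator $F(\cdot) - \lambda^+_1(F, \Omega)\cdot$, both of which are now standard in the viscosity literature but require attention to the structural hypotheses \HYP{Felliptic}--\HYP{Fconcave} in the present generality.
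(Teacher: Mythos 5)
Your uniqueness, existence, and normalized-convergence steps coincide with the paper's argument (Perron plus the comparison principle of \REM{comparison}; then compactness of $\tilde{\varphi}^{\lambda,f}$ and the simplicity of $\lambda^+_1(F,\Omega)$ from \THM{eigenvalues}). The gap is in the blow-up step, exactly at the point you flag as delicate. Normalize $\lambda^+_1(F,\Omega)=0$. Your function $w:=t^*\efunpos-u\ge 0$ satisfies, via \EQ{Fsuperlinearminus}, only the \emph{subsolution} inequality $F(D^2w,Dw,w,x)\le -f\le 0$, whence by \HYP{Felliptic} $\puccisub{\gamma}{\Gamma}(D^2w)-\consp|Dw|-\consz w\le 0$. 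The strong minimum principle (``a nonnegative function is either positive or identically zero'') and the Hopf lemma require the opposite, \emph{supersolution}, structure $\Puccisub{\gamma}{\Gamma}(D^2w)+\consp|Dw|+Cw\ge 0$; what \HYP{Felliptic} actually yields here is $\Puccisub{\gamma}{\Gamma}(D^2w)+\consp|Dw|+\consz w\ge -f$, whose right-hand side is negative precisely where $f>0$. A nonnegative subsolution can vanish at an interior point without vanishing identically (e.g.\ $w=|x-x_0|^2$ satisfies $-\Delta w\le \lambda w$ for $w\ge0$), so neither the dichotomy ``$w\equiv0$ or $w>0$'' nor the Hopf step is available, and the sliding-from-above scheme collapses. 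The sign is not incidental: for a \emph{concave} $F$, (multiple of eigenfunction) minus (solution) is a subsolution of the extremal inequality; your argument would go through if $F$ were convex.

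The nonexistence you need is true; the paper simply cites it (\cite[Proposition 6.1]{Armstrong:2009}). A self-contained repair slides from \emph{below} using \THM{BNV}. Since $F(D^2u,Du,u,x)=f\ge0$, hypothesis \HYP{Felliptic} and $F(0,0,0,x)=0$ give $\Puccisub{\gamma}{\Gamma}(D^2u)+\consp|Du|+\consz u\ge 0$, so the strong minimum principle applies to $u$ itself: either $u\equiv 0$ (forcing $f\equiv0$, a contradiction) or $u>0$ in $\Omega$. Then for every $s>0$ one has
\begin{equation*}
F(sD^2\efunpos,sD\efunpos,s\efunpos,x)=0\le f= F(D^2u,Du,u,x) \inOmega,
\end{equation*}
with $s\efunpos\le0$ on $\partial\Omega$ and $u>0$ in $\Omega$, so part \ENUM{teh-2} of \THM{BNV} yields either $s\efunpos\le u$ in $\Omega$ or $u\equiv ts\efunpos$; the latter again forces $f\equiv0$. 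Hence $s\efunpos\le u$ for \emph{every} $s>0$, which is impossible for a bounded $u$. With this substitution the rest of your proposal is sound.
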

\begin{proof}
The existence of solutions can be obtained via a standard argument using the Perron method. Large multiples of the principal eigenfunctions provide sub- and supersolutions of \EQ{quantify-MP-lambda}, at least for $f$ vanishing near $\partial \Omega$. For more general $f$, we can approximate using \EQ{Fsuperlinearminus} and \THM{ABP-eigen}. Uniqueness follows at once from \REM{comparison}. See \cite{Armstrong:2009} or \cite{Quaas:2008} for a complete proof.

We now demonstrate \EQ{varphi-lambda-f-blowup} under the assumption that $f\geq 0$ and $f \not\equiv 0$. In this case we have $\varphi^{\lambda,f} \geq 0$ and $\varphi^{\lambda,f} \not\equiv 0$. Using homogeneity, we see that the function $\tilde{\varphi}^{\lambda,f}$ is a viscosity solution of
\begin{equation}\label{eq:varphi-lambda-blowup-1}
F(D^2\tilde{\varphi}^{\lambda,f},D\tilde{\varphi}^{\lambda,f},\tilde{\varphi}^{\lambda,f},x) = \lambda \tilde{\varphi}^{\lambda,f} + f/ \| \varphi^{\lambda,f} \|_{L^\infty(\Omega)} \inOmega.
\end{equation}
If \EQ{varphi-lambda-f-blowup} does not hold, then we may find a subsequence $\lambda_k \to \lambda^+_1(F,\Omega)$ and a number $0 < \eta < \infty$ such that
\begin{equation*}
\lim_{k\to \infty} \| \varphi^{\lambda_k,f} \|_{L^\infty(\Omega)} = \eta > 0.
\end{equation*}
Recall that $\| \tilde{\varphi}^{\lambda,f} \|_{L^\infty(\Omega)} = 1$. By using local $C^\alpha$ estimates available for fully nonlinear elliptic equations (c.f. \cite{Caffarelli:Book}), and taking a further subsequence, we may assume that there exists a function $\tilde{\varphi} \in C(\bar{\Omega})$ such that
\begin{equation*}
\tilde{\varphi}^{\lambda_k,f} \rightarrow \tilde{\varphi} \quad \mbox{locally uniformly on} \quad \bar{\Omega} \quad \mbox{as} \quad k \to \infty.
\end{equation*}
Passing to limits in \EQ{varphi-lambda-blowup-1}, we see that $\tilde{\varphi}$ is a viscosity solution of
\begin{equation*}
F(D^2\tilde{\varphi},D\tilde{\varphi},\tilde{\varphi},x) = \lambda^+_1(F,\Omega) \tilde{\varphi} + f/ \eta \inOmega.
\end{equation*}
Since $f/\eta \geq 0$, this contradicts \cite[Proposition 6.1]{Armstrong:2009}. We have established \EQ{varphi-lambda-f-blowup}. We may now pass to limits in \EQ{varphi-lambda-blowup-1} to deduce that $\tilde{\varphi} = \efunpos$, and that the whole sequence $\tilde{\varphi}^{\lambda,f}$ converges uniformly as $\lambda \to \lambda^+_1(F,\Omega)$ to $\varphi^+_1$.
\end{proof}

Arguing in a similar way as \PROP{quantify-MP-lambda}, we obtain:

\begin{prop}\label{prop:quantify-MP-lambda-minus}
Suppose that $\lambda < \lambda^-_1(F,\Omega)$ and $f\in C(\Omega) \cap L^n(\Omega)$ is such that $f \leq 0$. Then there is a unique nonpositive viscosity solution $\bar{\varphi}^{\lambda,f}\in C(\bar{\Omega})$ of the Dirichlet problem
\begin{equation}\label{eq:quantify-MP-lambda-minus}
\left\{ \begin{aligned}
& F(D^2\bar{\varphi}^{\lambda,f},D\bar{\varphi}^{\lambda,f},\bar{\varphi}^{\lambda,f},x) = \lambda \bar{\varphi}^{\lambda,f} + f & \mbox{in} & \ \Omega, \\
& \bar{\varphi}^{\lambda,f} = 0 & \mbox{on} & \ \partial \Omega.
\end{aligned}\right.
\end{equation}
Moreover, if $f \leq 0$ in $\Omega$ and $f \not \equiv 0$, then
\begin{equation}\label{eq:varphi-lambda-f-blowup-minue}
\lim_{\lambda \nearrow \lambda^-_1(F,\Omega)} \sup_\Omega \left| \bar{\varphi}^{\lambda,f} \right| = +\infty,
\end{equation}
and the normalized function $\bar{\varphi}^{\lambda,f} / \| \bar{\varphi}^{\lambda,f} \|_{L^\infty(\Omega)}$ converges uniformly on $\bar{\Omega}$ to the negative principal eigenfunction $\varphi^-_1$ of $F$ in $\Omega$, as $\lambda \to \lambda^-_1(F,\Omega)$.
\end{prop}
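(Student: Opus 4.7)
The plan is to mimic the proof of \PROP{quantify-MP-lambda}, with sign reversals appropriate to the negative half-eigenvalue. A convenient device is the duality $\tilde{F}(M,p,z,x) := -F(-M,-p,-z,x)$ noted after \HYP{Fconcave}: $\tilde{F}$ satisfies \HYP{Fcontinuous}--\HYP{Fhomogeneous} and is convex, and the substitution $v:=-u$, $g:=-f \geq 0$ identifies nonpositive viscosity solutions of \EQ{quantify-MP-lambda-minus} with nonnegative viscosity solutions of $\tilde{F}(D^2v,Dv,v,x) = \lambda v + g$ with zero boundary data. Reading \THM{eigenvalues} through this duality gives $\lambda^+_1(\tilde{F},\Omega) = \lambda^-_1(F,\Omega)$ with corresponding positive eigenfunction $-\varphi^-_1$. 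Hence the range $\lambda < \lambda^-_1(F,\Omega)$ is precisely the range $\lambda < \lambda^+_1(\tilde{F},\Omega)$, and uniqueness of nonpositive solutions of \EQ{quantify-MP-lambda-minus} follows from \REM{comparison} applied to $\tilde{F}$.

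For existence, I would apply the Perron method directly to $F$. The function $u\equiv 0$ is a nonpositive supersolution of \EQ{quantify-MP-lambda-minus}, since $F(0,0,0,x)=0 \geq f$ by \HYP{Fhomogeneous} and $f \leq 0$. A nonpositive subsolution is $C\varphi^-_1$ for $C>0$ sufficiently large, provided $f$ vanishes in a neighborhood of $\partial\Omega$: by homogeneity, $F(D^2(C\varphi^-_1),D(C\varphi^-_1),C\varphi^-_1,x) = C\lambda^-_1 \varphi^-_1$, so one needs $C(\lambda^-_1-\lambda)\varphi^-_1 \leq f$, which is arranged by choosing $C$ large since $\lambda^-_1-\lambda>0$ and $\varphi^-_1<0$ is bounded away from zero on the compact support of $f$. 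For general $f\in C(\Omega)\cap L^n(\Omega)$ with $f\leq 0$, approximate $f$ by such $f_k$ with $f_k \to f$ in $L^n(\Omega)$, use \EQ{Fsuperlinearminus} together with the ABP bound \EQ{ABP-eigen-minus} to show that $\{\bar{\varphi}^{\lambda,f_k}\}$ is Cauchy in $L^\infty(\Omega)$, and pass to the limit by stability of viscosity solutions.

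For the blow-up claim \EQ{varphi-lambda-f-blowup-minue}, argue by contradiction as in the proof of \PROP{quantify-MP-lambda}. Assume $\|\bar{\varphi}^{\lambda_k,f}\|_{L^\infty(\Omega)} \to \eta \in (0,\infty)$ along some $\lambda_k \nearrow \lambda^-_1(F,\Omega)$. By homogeneity, the normalized $\tilde{\varphi}^{\lambda,f} := \bar{\varphi}^{\lambda,f}/\|\bar{\varphi}^{\lambda,f}\|_{L^\infty(\Omega)}$ satisfies
\begin{equation*}
F(D^2\tilde{\varphi}^{\lambda,f},D\tilde{\varphi}^{\lambda,f},\tilde{\varphi}^{\lambda,f},x) = \lambda \tilde{\varphi}^{\lambda,f} + f/\|\bar{\varphi}^{\lambda,f}\|_{L^\infty(\Omega)} \inOmega,
\end{equation*}
with unit $L^\infty$ norm and $\tilde{\varphi}^{\lambda,f} \leq 0$. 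Local $C^\alpha$ estimates for fully nonlinear equations yield a subsequence converging uniformly on $\bar{\Omega}$ to a nonpositive, nonzero $\tilde{\varphi}$ solving $F(D^2\tilde{\varphi},D\tilde{\varphi},\tilde{\varphi},x) = \lambda^-_1(F,\Omega)\tilde{\varphi} + f/\eta$ with zero boundary data. Since $f/\eta \leq 0$ and $f\not\equiv 0$, this contradicts the minus-side analog of \cite[Proposition 6.1]{Armstrong:2009}, obtained immediately by applying that proposition to $\tilde{F}$ and $-f/\eta\geq 0$ at $\lambda^+_1(\tilde{F},\Omega)=\lambda^-_1(F,\Omega)$. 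Once \EQ{varphi-lambda-f-blowup-minue} is established, passing to limits in the normalized equation and invoking simplicity of $\lambda^-_1(F,\Omega)$ from \THM{eigenvalues} identifies every limit point as a scalar multiple of $\varphi^-_1$, and the unit-norm normalization together with the sign forces convergence of the whole family. The main obstacle is the invocation of the nonexistence result at $\lambda=\lambda^-_1$, which the duality with $\tilde{F}$ reduces cleanly to the already-cited concave/positive case.
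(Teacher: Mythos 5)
Your outline follows the same route the paper intends (its entire proof of this statement is ``arguing in a similar way as \PROP{quantify-MP-lambda}''), but two of your reductions break down precisely in the range $\lambda^+_1(F,\Omega) \leq \lambda < \lambda^-_1(F,\Omega)$, which is the only range where \PROP{quantify-MP-lambda-minus} says anything not already contained in \PROP{quantify-MP-lambda}. First, uniqueness: you invoke ``\REM{comparison} applied to $\tilde F$.'' The proof of \REM{comparison} uses \EQ{Fsuperlinearminus}, i.e.\ the concavity of $F$, to turn the difference of two solutions into a subsolution; $\tilde F$ is convex, so that inequality reverses and the argument does not transfer. Indeed the full comparison principle for $\tilde F - \lambda$ fails for $\lambda \geq \lambda^+_1(F,\Omega) = \lambda^-_1(\tilde F,\Omega)$ (the function $-\varphi^+_1$, read through the duality, is a witness). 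What is true --- and what the proposition asserts --- is uniqueness among \emph{nonpositive} solutions, a one-sided statement which requires \THM{BNV} (condition (i)) or the arguments of \cite{Armstrong:2009,Quaas:2008}, not the two-sided comparison of \REM{comparison}.

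Second, the approximation step has the same sign mismatch. For $w := \bar{\varphi}^{\lambda,f_j} - \bar{\varphi}^{\lambda,f_k}$, inequality \EQ{Fsuperlinearminus} gives $F(D^2w,Dw,w,x) - \lambda w \leq f_j - f_k$, so $w$ is a \emph{subsolution}; but \EQ{ABP-eigen-minus} controls $\sup w^-$ only for \emph{supersolutions}, and the subsolution estimate \EQ{ABP-eigen} is unavailable once $\lambda \geq \lambda^+_1(F,\Omega)$. Hence the Cauchy claim does not follow as written. The natural repair is compactness rather than a Cauchy estimate: \EQ{ABP-eigen-minus} applied to each $\bar{\varphi}^{\lambda,f_k}$ gives a uniform $L^\infty$ bound, interior $C^\alpha$ estimates together with boundary barriers give equicontinuity up to $\partial\Omega$, and a subsequential limit solves \EQ{quantify-MP-lambda-minus}; uniqueness is then quoted separately. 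The remaining steps --- the sub/supersolution pair $C\varphi^-_1 \leq 0$, the blow-up contradiction, and identification of the limit via simplicity --- are sound and match the paper's intent, provided you check that \cite[Proposition 6.1]{Armstrong:2009} is stated for operators satisfying only \HYP{Fcontinuous}--\HYP{Fhomogeneous}, so that it may legitimately be applied to the convex operator $\tilde F$.
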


\begin{rem}
We will also require the following property of $\varphi^{\lambda,f}$ which is easily deduced from \EQ{Fsuperlinear} and \REM{comparison}. Namely, if $f,g,h \in C(\Omega) \cap L^n(\Omega)$ such that $h \leq f+g$, then
\begin{equation*}
\varphi^{\lambda,h} \leq \varphi^{\lambda,f} + \varphi^{\lambda,g} \quad \mbox{in} \quad \Omega.
\end{equation*}
for any $\lambda < \lambda^+_1(F,\Omega)$. In particular, if $f,g$ and $h$ are nonnegative, then
\begin{equation}\label{eq:trianglevarphi}
\| \varphi^{\lambda,h} \|_{L^\infty(\Omega)} \leq \| \varphi^{\lambda,f} \|_{L^\infty(\Omega)} + \| \varphi^{\lambda,g} \|_{L^\infty(\Omega)}.
\end{equation}
\end{rem}


\subsection{Sion's minimax theorem}

The proof of \THM{minimax-eigenvalue}, like the original in \cite{Donsker:1975} for linear operators, will employ a minimax theorem due to Sion \cite{Sion:1958}. For the convenience of the reader, we will now state this result.

\begin{definition}
Let $A$ and $B$ be sets. A function $f:A\times B \to \R$ is called \emph{convex-like in} $A$ if, for any $x,y\in A$ and $0\leq \alpha \leq 1$, there exists $z\in A$ such that
\begin{equation*}
f(z,b) \leq \alpha f(x,b) + (1-\alpha) f(y,b) \quad \mbox{for every} \  b\in B.
\end{equation*}
Similarly, $f$ is called \emph{concave-like in} $B$ if, for any $x,y\in B$ and $0\leq \alpha \leq 1$, there exists $z\in B$ such that
\begin{equation*}
f(a,z) \geq \alpha f(a,x) + (1-\alpha) f(a,y) \quad \mbox{for every}\ a\in A.
\end{equation*}
\end{definition}

\begin{thm}[See Sion \cite{Sion:1958}]\label{thm:minimax}
Suppose that $A$ is a compact topological space, $B$ is a set, and $f:A\times B \to \R$ is concave-like in $B$, and upper semi-continuous and convex-like in $A$. Then
\begin{equation}
\inf_{x\in A}\sup_{y\in B} f(x,y) = \sup_{y\in B}\inf_{x\in A} f(x,y).
\end{equation}
\end{thm}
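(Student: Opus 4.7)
The plan is to establish the nontrivial inequality $\inf_{x\in A}\sup_{y\in B} f(x,y)\le\sup_{y\in B}\inf_{x\in A}f(x,y)$; the reverse inequality follows trivially from the chain $\inf_{x'}f(x',y)\le f(x,y)\le\sup_{y'}f(x,y')$ valid for each $(x,y)\in A\times B$. I would argue by contradiction: suppose the nontrivial inequality fails, and fix real numbers $\alpha<\beta$ with
\begin{equation*}
\sup_{y\in B}\inf_{x\in A}f(x,y)<\alpha<\beta<\inf_{x\in A}\sup_{y\in B}f(x,y).
\end{equation*}

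The first step is a compactness reduction that produces a finite witness. By the upper semicontinuity of $f(\cdot,y)$ on $A$, each sublevel set $U_y:=\{x\in A : f(x,y)<\beta\}$ is open in $A$. The right-hand inequality above forces $\sup_y f(x,y)>\beta$ for every $x\in A$, so $\bigcap_{y\in B}U_y=\emptyset$. Compactness of $A$ then yields a finite collection $y_1,\ldots,y_n\in B$ with $\bigcap_{i=1}^n U_{y_i}=\emptyset$, which rewrites as
\begin{equation*}
\min_{x\in A}\max_{1\le i\le n} f(x,y_i)\ge\beta.
\end{equation*}
Note that the minimum is attained because $x\mapsto\max_i f(x,y_i)$ is USC on the compact space $A$.

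The heart of the argument is then the claim that for every finite family $y_1,\ldots,y_n\in B$,
\begin{equation*}
\min_{x\in A}\max_{1\le i\le n} f(x,y_i)\;\le\;\sup_{y\in B}\inf_{x\in A}f(x,y),
\end{equation*}
which together with the display above contradicts the choice of $\alpha<\beta$. I would prove this claim by induction on $n$. The case $n=1$ is tautological. For the inductive step, given $y_1,\ldots,y_{n+1}$ and any $c$ strictly less than the left-hand side, I would split $A$ into the two closed sets $A_1=\{x:f(x,y_{n+1})\ge c\}$ and $A_2=\{x:\max_{i\le n}f(x,y_i)\ge c\}$ (both closed by USC) and apply the inductive hypothesis on each side after using the concave-like property in $B$ to amalgamate $y_1,\ldots,y_n$ into a single auxiliary parameter $\tilde y\in B$. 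The convex-like property in $A$ is then invoked to interpolate between a point of $A_1$ and a point of $A_2$: one obtains some $x^\star\in A$ with $f(x^\star,y_{n+1})<c$ and $f(x^\star,\tilde y)<c$ simultaneously, which contradicts $c$ lying below the minimax value. This pairwise interpolation is exactly the two-set version of the classical connectedness argument in Sion's original proof.

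The main obstacle is precisely this inductive step, because $A$ and $B$ carry no linear structure, only the abstract convex/concave-like properties. In particular, the interpolation argument must avoid any use of line segments: it has to extract the required $x^\star$ from a single application of convex-likeness together with an appeal to USC on the compact set $A$, leveraging the fact that the level sets of a USC function on a compact space are closed and that convex-like combinations cannot separate two nonempty closed subsets that jointly cover $A$. Once this is carried out carefully, combining the finite cover with the inductive claim closes the contradiction and completes the proof.
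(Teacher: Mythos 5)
The paper offers no proof of this statement---it is quoted from Sion's paper (and in the convex-like/concave-like form given here it is essentially Ky Fan's 1953 minimax theorem)---so your attempt must be judged on its own. Your skeleton is the correct and standard one: the trivial inequality, then the reduction by upper semicontinuity of $f(\cdot,y)$ and compactness of $A$ to a finite subfamily $y_1,\dots,y_n$ with $\min_{x\in A}\max_i f(x,y_i)\ge\beta$, and the observation that everything reduces to the finite-family lemma $\min_{x\in A}\max_{1\le i\le n} f(x,y_i)\le\sup_{y\in B}\inf_{x\in A}f(x,y)$. That part is fine.

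The gap is that this finite-family lemma is the entire content of the theorem, and your inductive scheme for it does not close. Two concrete problems. First, you invoke the inductive hypothesis ``on each side'' after writing $A=A_1\cup A_2$ with $A_1,A_2$ closed; but convex-likeness of $f$ in $A$ does not pass to a closed subset, since the witness $z$ supplied by the definition for two points of $A_1$ need not lie in $A_1$. Second, and more fundamentally, the interpolation producing $x^\star$ with $f(x^\star,y_{n+1})<c$ and $f(x^\star,\tilde y)<c$ simultaneously cannot come from ``a single application of convex-likeness'': for each fixed $\alpha$ one gets \emph{some} point dominated by the $\alpha$-combination, but $\alpha\mapsto z_\alpha$ is neither well-defined nor continuous, so no intermediate-value or connectedness mechanism is available. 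That connectedness argument is exactly the engine of Sion's own proof, but it needs $A$ to be a convex subset of a linear topological space with $f(\cdot,y)$ quasi-convex, so that level sets are convex and hence connected; it is unavailable for abstract convex-like functions on a bare compact space. The standard way to prove the finite-family lemma in this setting is not induction but separation in $\R^n$: convex-likeness makes the set $S=\{u\in\R^n:\ u_i\ge f(x,y_i)\ \text{for all } i,\ \text{for some } x\in A\}$ convex, and if $\min_x\max_i f(x,y_i)\ge\beta$ then $S$ is disjoint from the open convex set $\{u:u_i<\beta\ \forall i\}$; separating yields weights $\lambda_i\ge 0$, $\sum_i\lambda_i=1$, with $\sum_i\lambda_i f(x,y_i)\ge\beta$ for all $x$, and iterated concave-likeness in $B$ then amalgamates $y_1,\dots,y_n$ with these weights into a single $\tilde y$ satisfying $\inf_{x\in A}f(x,\tilde y)\ge\beta$, which contradicts $\sup_{y}\inf_{x}f<\alpha<\beta$. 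With that substitution your argument becomes a complete proof.
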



\section{Proof of main results} \label{sec:mainresults}

In this section we will prove Theorems \ref{thm:minimax-eigenvalue} and \ref{thm:Dirichlet-at-resonance}.


\subsection{Proof of \THM{minimax-eigenvalue}}
Let $J:\probmeas\times \posfun \to \R$ denote the functional
\begin{equation}\label{eq:J-functional-plus}
J(\mu,\varphi) = \int_\Omega \frac{F(D^2\varphi(x),D\varphi(x),\varphi(x),x)}{\varphi(x)}\, d\mu(x).
\end{equation}
We will show now that $J$ satisfies the hypotheses of \THM{minimax}. First, recall that $\probmeas$ is a compact topological space (with respect to the usual weak-star topology), and for each fixed $\varphi \in \posfun$ the map
\begin{equation*}
\mu \mapsto J(\mu,\varphi)
\end{equation*}
is a continuous and linear on $\probmeas$. In particular, $J$ is continuous and convex-like in $\probmeas$. To see that $J$ is concave-like in $C^2_+(\bar{\Omega})$, select $u,v\in C^2_+(\bar{\Omega})$ and $0 < \alpha < 1$. A simple calculation reveals that for $w:=u^\alpha v^{1-\alpha}$ we have
\begin{align}
Dw & = w \left( \alpha \frac{Du}{u} + (1-\alpha) \frac{Dv}{v} \right),\\
\intertext{and}
D^2 w & = w\left( \alpha \frac{D^2u}{u} + (1-\alpha) \frac{D^2v}{v} - \alpha (1-\alpha)\left(\frac{Du}{u} - \frac{Dv}{v} \right) \otimes \left(\frac{Du}{u} - \frac{Dv}{v} \right) \right).
\end{align}
The operator $\puccisub{\gamma}{\Gamma}$ has the property that
\begin{equation*}
\puccisub{\gamma}{\Gamma}(-p \otimes p) = \gamma |p|^2
\end{equation*}
for any $p \in \R^n$. Using \HYP{Felliptic} and \EQ{Fsuperlinear} we immediately deduce
\begin{multline}\label{eq:key-convexity}
F(D^2w,Dw,w,x) \geq w \left( \frac{\alpha F(D^2u,Du,u,x)}{u} +  \frac{(1-\alpha)F(D^2v,Dv,v,x)}{v} \right.\\ 
\left. + \frac{\gamma \alpha (1-\alpha)}{u^2v^2} \left| vDu - uDv \right|^2 \right).
\end{multline}
Therefore
\begin{equation*}
J(\mu, w ) \geq \alpha J(\mu,u) + (1-\alpha) J(\mu,v)
\end{equation*}
for any $\mu \in \probmeas$, confirming that $J$ is concave-like in its second argument.

\medskip

Recall that for any fixed continous function $g \in C(\Omega)$,
\begin{equation*}
\inf_{x\in \Omega} g(x) = \inf_{\mu \in \probmeas} \int_\Omega g\, d\mu.
\end{equation*}
We now employ \EQ{maximin-formula-2} and \THM{minimax} to deduce that
\begin{equation}\label{eq:flipmaximin}
\begin{aligned}
\lambda^+_1(F,\Omega) & =  \sup_{\varphi\in \posfun} \inf_{x\in \Omega} \, \frac{F\left(D^2\varphi(x),D\varphi(x),\varphi(x),x\right)}{\varphi(x)} \\
& = \sup_{\varphi\in \posfun} \inf_{\mu \in \probmeas} J(\mu,\varphi) \\
& = \inf_{\mu \in \probmeas} \sup_{\varphi\in \posfun} J(\mu,\varphi). \qedhere
\end{aligned}
\end{equation}
Select a sequence $\{\mu_k\} \subseteq \probmeas$ for which
\begin{equation*}
\sup_{\varphi \in \posfun} J(\mu_k,\varphi) \to \lambda^+_1(F,\Omega). 
\end{equation*}
Up to a subsequence, there exists $\mu \in \probmeas$ such that $\mu_k \weakly \mu$ weakly in $\probmeas$, and it is immediate that
\begin{equation*}
\sup_{\varphi\in \posfun} J(\mu,\varphi) = \lambda^+_1(F,\Omega).
\end{equation*}
Thus the infimum in the last line of \EQ{flipmaximin} is actually a minimum. We have proven \EQ{minimax-1}.

\medskip

Denote the set of minimizing measures by
\begin{equation}\label{eq:minimizing-measures}
\dvmeas = \left\{ \mu \in \probmeas : \sup_{\varphi\in\posfun} J(\mu,\varphi) = \lambda^+_1(F,\Omega) \right\}.
\end{equation}
We have just seen that $\dvmeas$ is nonempty. From \EQ{eigenvalues-monotonic-in-domain} we see that $\mu(E) > 0$ for any $\mu \in \dvmeas$ and Borel set $E$ with $|E| > 0$.

\medskip

We claim that for any $\mu \in \dvmeas$ and any nonnegative $f\in C(\Omega)\cap L^n(\Omega)$,
\begin{equation}\label{eq:reciprocal-integrable}
\int_\Omega \frac{f}{\efunpos}\, d\mu \leq \liminf_{\lambda \nearrow \lambda^+_1(F,\Omega)} \left( \lambda^+_1(F,\Omega) - \lambda \right) \| \varphi^{\lambda,f} \|_{L^\infty(\Omega)}.
\end{equation}
We will first demonstrate \EQ{reciprocal-integrable} under the assumption that $f$ is smooth and positive on $\bar{\Omega}$. For each $\lambda < \lambda^+_1(F,\Omega)$, let $\varphi^{\lambda,f}$ and $\tilde{\varphi}^{\lambda,f}$ be as in \PROP{quantify-MP-lambda}. Employing $C^{2,\alpha}$ estimates we have $\varphi^{\lambda,f} \in C^2(\bar{\Omega})$, and $\varphi^{\lambda,f} \geq 0$ in $\Omega$. Using $\mu \in \dvmeas$ and \HYP{Felliptic} we obtain, for any $\ep >0$, 
\begin{equation*}
\lambda^+_1(F,\Omega) \geq J(\mu,\varphi^{\lambda,f}+\ep) \geq \int_\Omega \left( \frac{\lambda \varphi^{\lambda,f} - \consz \ep + f}{\varphi^{\lambda,f}+\ep}\right)\, d\mu.
\end{equation*}
Rearranging, we obtain
\begin{equation}\label{eq:reciprocal-integrable-4}
\int_\Omega \frac{-(\consz + \lambda)\ep+f}{\tilde{\varphi}^{\lambda,f} + \ep / \| \varphi^{\lambda,f} \|_{L^\infty(\Omega)}}\, d\mu \leq  \left( \lambda^+_1(F,\Omega) - \lambda \right) \| \varphi^{\lambda,f} \|_{L^\infty(\Omega)}.
\end{equation}
Since the integrand is positive for small enough $\ep > 0$, Fatou's Lemma allows us to send $\ep \to 0$ to get
\begin{equation*}
\int_\Omega \frac{f}{\tilde{\varphi}^{\lambda,f}}\, d\mu \leq \left( \lambda^+_1(F,\Omega) - \lambda \right) \| \varphi^{\lambda,f} \|_{L^\infty(\Omega)}.
\end{equation*}
Now we pass to the limit $\lambda \to \lambda^+_1(F,\Omega)$, using \PROP{quantify-MP-lambda} and Fatou's lemma again, to get \EQ{reciprocal-integrable} in the case that $f$ is positive and smooth.

For general nonnegative $f \in C(\Omega) \cap L^n(\Omega)$, select a sequence $\{ f_k \}$ of smooth, positive functions such that $f_k \to f$ pointwise and in $L^n(\Omega)$. Using \THM{ABP-eigen} and \EQ{trianglevarphi}, we see that
\begin{align*}
\int_\Omega \frac{f}{\varphi^+_1}\, d\mu & \leq  \liminf_{k\to \infty} \int_\Omega  \frac{f_k}{\varphi^+_1} \, d\mu \\
& \leq \liminf_{k\to \infty} \liminf_{\lambda \nearrow \lambda^+_1(F,\Omega)} \left( \lambda^+_1(F,\Omega) - \lambda \right) \| \varphi^{\lambda,f_k} \|_{L^\infty(\Omega)} \\
& \leq \liminf_{k\to \infty} \liminf_{\lambda \nearrow \lambda^+_1(F,\Omega)} \left( \lambda^+_1(F,\Omega) - \lambda \right) \left( \| \varphi^{\lambda,f} \|_{L^\infty(\Omega)} + \| \varphi^{\lambda,|f_k-f|} \|_{L^\infty(\Omega)} \right) \\
& \leq \liminf_{\lambda \nearrow \lambda^+_1(F,\Omega)} \left( \lambda^+_1(F,\Omega) - \lambda \right) \| \varphi^{\lambda,f} \|_{L^\infty(\Omega)}  + C_1  \liminf_{k\to \infty} \| f_k - f \|_{L^n(\Omega)} \\
& = \liminf_{\lambda \nearrow \lambda^+_1(F,\Omega)} \left( \lambda^+_1(F,\Omega) - \lambda \right) \| \varphi^{\lambda,f} \|_{L^\infty(\Omega)}. \qedhere
\end{align*}
This demonstrates \EQ{reciprocal-integrable}.

\medskip

According to \THM{ABP-eigen} and \EQ{reciprocal-integrable}, for any $\mu \in \dvmeas$ and $f \in C(\Omega) \cap L^n(\Omega)$, we have the estimate
\begin{equation*}
\left| \int_\Omega \frac{f}{\efunpos}\, d\mu \right| \leq \int_\Omega \frac{|f|}{\efunpos}\, d\mu \leq C_1 \| f \|_{L^n(\Omega)}.
\end{equation*}
Hence the linear functional
\begin{equation*}
f \mapsto \int_\Omega \frac{f}{\efunpos} \, d\mu
\end{equation*}
can be extended to a bounded linear functional on $L^n(\Omega)$, and there exists $\efunadjpos{\mu} \in L^{n/(n-1)}(\Omega)$ such that
\begin{equation*}
d\mu(x) = \varphi^+_1(x) \varphi^*_\mu(x) \, dx.
\end{equation*}
Recall that $\mu(E) > 0$ for any Borel set $E$ of positive Lebesgue measure. Thus the set $\{ \varphi^*_\mu = 0 \}$ is of zero Lebesgue measure. This completes the proof of \THM{minimax-eigenvalue}. \qed

\begin{rem}
The principal eigenvalue $\lambda^+_1(F,\Omega)$ also satisfies the relaxed minimax formula
\begin{equation}
\lambda^+_1(F,\Omega) = \min_{\psi} \sup_{\varphi} \| \psi  \|_{L^1(\Omega)}^{-1} \int_\Omega \frac{F(D^2\varphi,D\varphi,\varphi,x)}{\varphi}\, \psi\, dx
\end{equation}
where the minimum is taken over all positive $\psi\in L^{n/(n-1)}(\Omega)$ and the maximum over positive functions $\varphi\in W^{2,n}(\Omega) \cap C^0_+(\Omega)$.
\end{rem}


\subsection{Proof of \THM{Dirichlet-at-resonance}}

We will assume without loss of generality that
\begin{equation*}
\lambda^+_1(F,\Omega) = 0,
\end{equation*}
so that the Dirichlet problem \EQ{DP} reads
\begin{equation} \label{eq:DP-1}
\left\{ \begin{aligned}
& F(D^2u,Du,u,x) =  f & \mbox{in} & \ \Omega, \\
& u = 0 & \mbox{on} & \ \partial \Omega,
\end{aligned}\right.
\end{equation}
Define the set
\begin{multline}\label{eq:def-of-S}
\mathcal{S}(F,\Omega) = \left\{  f \in C(\Omega) \cap L^n(\Omega) : \mbox{there exists} \ \varphi \in W^{2,n}(\Omega) \ \mbox{such that} \ \varphi \geq 0 \ \mbox{in} \ \Omega \right. \\
\left. \mbox{and} \ F(D^2\varphi,D\varphi,\varphi,x) \geq f \ \mbox{in} \ \Omega \right\}.
\end{multline}
Owing to the superlinearity \EQ{Fsuperlinear} of $F$, it is immediate that $\mathcal{S}(F,\Omega)$ is a convex cone in $C(\Omega)$. For each $n\leq p \leq \infty$, let $\overline{\mathcal{S}}_p(F,\Omega)$ denote the $L^p(\Omega)$-closure of $\mathcal{S}(F,\Omega) \cap L^p(\Omega)$.

\smallskip

\THM{Dirichlet-at-resonance} follows from Propositions \ref{prop:Sfomega-and-solvability}, \ref{prop:Spcharacterization} and \ref{prop:DP-sufficient}, below.

\begin{prop} \label{prop:Sfomega-and-solvability}
Assume that $f \in C(\Omega) \cap L^n(\Omega)$. The Dirichlet problem \EQ{DP-1} is solvable provided that $f \in\mathcal{S}(F,\Omega)$ and $\lambda^-_1(F,\Omega) > 0$. On the other hand, if \EQ{DP-1} has a solution, then $f - \ep \in \mathcal{S}(F,\Omega)$ for any $\ep > 0$.
\end{prop}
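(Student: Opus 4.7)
For the forward direction, my plan is to build a sub-/supersolution pair and solve \EQ{DP-1} by passing to the limit $\lambda \nearrow 0 = \lambda^+_1(F,\Omega)$ in approximating problems with spectral parameter strictly below $\lambda^+_1$. The hypothesis $f \in \mathcal{S}(F,\Omega)$ directly furnishes a supersolution $\varphi \in W^{2,n}(\Omega)$ with $\varphi \geq 0$ and $F\varphi \geq f$ in $\Omega$. For a subsolution, I would invoke \PROP{quantify-MP-lambda-minus} with source $-|f|$ and spectral parameter $0$ (admissible since $\lambda^-_1(F,\Omega) > 0$), producing a nonpositive $\bar\varphi \in C(\bar\Omega)$ with $F\bar\varphi = -|f|$ in $\Omega$ and $\bar\varphi = 0$ on $\partial\Omega$.

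For each $\lambda < 0$, \PROP{quantify-MP-lambda} produces the unique $u_\lambda \in C(\bar\Omega)$ with $Fu_\lambda = \lambda u_\lambda + f$ in $\Omega$ and $u_\lambda = 0$ on $\partial\Omega$. Comparison for $F - \lambda$ via \REM{comparison} (which applies because $\lambda < 0 = \lambda^+_1$) then yields the sandwich $\bar\varphi \leq u_\lambda \leq \varphi$: the upper inequality uses $(F - \lambda)\varphi \geq f - \lambda\varphi \geq f = (F-\lambda)u_\lambda$ together with $u_\lambda = 0 \leq \varphi$ on $\partial\Omega$, exploiting $\lambda < 0$ and $\varphi \geq 0$; the lower inequality is symmetric, using $\bar\varphi \leq 0$. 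This provides a uniform $L^\infty(\Omega)$-bound on $\{u_\lambda\}$. Interior $C^{2,\alpha}$ Evans-Krylov estimates combined with boundary barriers built from \HYP{Felliptic} and the exterior sphere condition then furnish the compactness to extract, via Arzel\`a-Ascoli, a uniform limit $u \in C(\bar\Omega)$ as $\lambda \nearrow 0$. Stability of viscosity solutions gives $Fu = f$ in $\Omega$, and uniform boundary equicontinuity preserves $u = 0$ on $\partial\Omega$.

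For the reverse direction, assume $u$ solves \EQ{DP-1}; fix $\ep > 0$ and set $\delta := \ep/\consz$ and $\varphi := u + c\varphi^+_1 + \delta$, with $c > 0$ to be chosen. Because $u \in C(\bar\Omega)$ vanishes on $\partial\Omega$ and $\varphi^+_1 > 0$ in $\Omega$ with $\varphi^+_1 = 0$ on $\partial\Omega$, a continuity/compactness argument shows $\varphi \geq \delta/2 > 0$ throughout $\Omega$ once $c$ is large enough: the additive constant $\delta$ absorbs $u$ in a thin boundary neighborhood where $\varphi^+_1$ is nearly zero, while on the complementary compact subset $c\varphi^+_1$ dominates $\|u\|_{L^\infty(\Omega)}$. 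Two applications of \EQ{Fsuperlinear}, together with homogeneity and the identities $F\varphi^+_1 = \lambda^+_1\varphi^+_1 = 0$ and $F(0,0,1,x) \geq -\consz$ (from \HYP{Felliptic}), then give
\begin{equation*}
F\varphi \geq Fu + cF\varphi^+_1 + \delta F(0,0,1,x) \geq f - \delta\consz = f - \ep.
\end{equation*}
Since $u \in W^{2,n}(\Omega)$ by Caffarelli's $W^{2,p}$ theory for concave equations with $L^n$ data and $\varphi^+_1 \in C^{2,\alpha}$, we have $\varphi \in W^{2,n}(\Omega)$, so $f - \ep \in \mathcal{S}(F,\Omega)$. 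The $\ep$-gap in the statement is precisely the cost of this constant shift, which circumvents the need for sharp boundary regularity of $u$.

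The principal obstacle I expect is securing the boundary condition in the limit of the forward direction: uniform equicontinuity of $\{u_\lambda\}$ up to $\partial\Omega$ requires boundary barriers for $F - \lambda$ whose amplitude is stable as $\lambda \nearrow 0$. These can be constructed from the Pucci bounds in \HYP{Felliptic} and the exterior sphere condition on $\Omega$, with the uniform sandwich $\bar\varphi \leq u_\lambda \leq \varphi$ controlling their amplitude; once this is established the remainder is standard Perron/limit machinery.
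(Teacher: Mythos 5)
Your reverse direction coincides with the paper's argument almost word for word: a compact exhaustion of $\Omega$ so that $u$ is small near $\partial\Omega$, the choice $\varphi := u + k\varphi^+_1 + \delta$ with $k$ large, and two applications of \EQ{Fsuperlinear} together with $F(0,0,\delta,x) \geq -\consz\delta$ to conclude $F\varphi \geq f - \consz\delta$; your $\delta = \ep/\consz$ normalization is cosmetic.

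The forward direction is correct but takes a genuinely different path. The paper, having produced the nonnegative supersolution $\varphi$ from $\mathcal{S}(F,\Omega)$ and the nonpositive subsolution $\psi$ (solving $F\psi = -|f|$ via \PROP{quantify-MP-lambda-minus}), simply observes $\psi \leq 0 \leq \varphi$ and invokes the Perron method for viscosity solutions together with Winter's up-to-the-boundary $W^{2,p}$ estimates in one step. You instead treat the resonant problem as the limit of the uniquely solvable problems at $\lambda < 0 = \lambda^+_1(F,\Omega)$: you use \REM{comparison} to sandwich $u_\lambda$ between the same $\bar\varphi$ and $\varphi$ (your sign gymnastics with $\lambda < 0$ to absorb $-\lambda\varphi \geq 0$ and $-\lambda\bar\varphi \leq 0$ are correct), obtain a $\lambda$-independent $L^\infty$ bound, and extract a limit by interior compactness plus boundary barriers. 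Both routes work, but the paper's is shorter because Perron packages the interior/boundary machinery one would otherwise have to assemble by hand. Two small cautions on your version: (a) with $f$ only in $C(\Omega) \cap L^n(\Omega)$ you should not claim interior Evans--Krylov $C^{2,\alpha}$ estimates; Krylov--Safonov $C^\alpha$ or $W^{2,p}_{\mathrm{loc}}$ is what the data allow and is also all that Arzel\`a--Ascoli requires; (b) the supersolution $\varphi$ need not vanish on $\partial\Omega$, so the sandwich alone does not give boundary equicontinuity from above --- the separate Pucci-type boundary barriers you mention are genuinely needed, not optional reinforcement.
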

\begin{proof}
Suppose $\lambda^-_1(F,\Omega) > 0$, and there exists a nonnegative function $\varphi \in W^{2,n}(\Omega)$ such that
\begin{equation}\label{eq:Sfomega-1}
F(D^2\varphi,D\varphi,\varphi,x) \geq f \quad \mbox{in} \quad \Omega.
\end{equation}
According to \PROP{quantify-MP-lambda-minus}, there exists a unique nonpositive solution $\psi \in W^{2,n}(\Omega)$ of the Dirichlet problem
\begin{equation*}
\left\{ \begin{aligned}
& F(D^2\psi,D\psi,\psi,x) = -|f| & \mbox{in} & \ \Omega, \\
& \psi = 0 & \mbox{on} & \ \partial \Omega.
\end{aligned}\right.
\end{equation*}
Since $\psi \leq 0 \leq \varphi$ on $\bar{\Omega}$, the existence of a solution $u \in W^{2,n}(\Omega)$ of \EQ{DP-1} follows from the standard Perron method for viscosity solutions and the $W^{2,p}$ estimates (see Winter \cite{Winter:2008}). 

On the other hand, suppose that $u$ is a solution of \EQ{DP-1}. According to the $W^{2,p}$ estimates, $u \in W^{2,n}(\Omega)$. Let $\ep > 0$, and select a compact subset $K \subseteq \Omega$ such that $u \geq -\ep$ on $\Omega \backslash K$. Select $k >0$ so large that $u + k\varphi^+_1 \geq 0$ on $K$. Set $\varphi := u + k\varphi^+_1 +\ep$. Then $\varphi \geq 0$ on $\bar{\Omega}$ and
\begin{equation*}
F(D^2\varphi,D\varphi,\varphi,x) \geq f - \consz \ep \quad \mbox{in} \quad \Omega.
\end{equation*}
Thus $f-\consz \ep \in \mathcal{S}(F,\Omega)$ for each $\ep >0$.
\end{proof}

\begin{rem}\label{rem:Sfomega-and-solvability}
If $f\in C(\Omega) \cap L^p(\Omega)$ for some $p> n$, then a solution $u$ of \EQ{DP-1} satisfies $u \in C^1(\bar{\Omega})$. Revising the argument above and using Hopf's Lemma, we see that $f\in \mathcal{S}(F,\Omega)$. 
\end{rem}

We will now characterize $\overline{\mathcal{S}}_p(F,\Omega)$ in terms of $\dvmeas$.

\begin{prop} \label{prop:Spcharacterization}
For each $n \leq p < \infty$, 
\begin{equation*}
\overline{\mathcal{S}}_p(F,\Omega) = \left\{ f \in L^p(\Omega) : \max_{\mu \in \dvmeas} \int_\Omega f \varphi^*_\mu\, dx \leq 0 \right\}.
\end{equation*}
Likewise,
\begin{equation*}
\overline{\mathcal{S}}_\infty(F,\Omega) = \left\{ f \in C(\bar{\Omega}) : \max_{\mu \in \dvmeas} \int_\Omega f \varphi^*_\mu \, dx \leq 0 \right\}.
\end{equation*}
\end{prop}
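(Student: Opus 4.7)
My plan is to prove both inclusions by duality, using that $\overline{\mathcal{S}}_p(F,\Omega)$ is a closed convex cone in $L^p(\Omega)$ (convexity coming from \EQ{Fsuperlinear}). Throughout I assume $\lambda^+_1(F,\Omega)=0$ without loss of generality. For the inclusion $\overline{\mathcal{S}}_p(F,\Omega) \subseteq \{f : \max_{\mu\in\dvmeas}\int f\varphi^*_\mu\,dx \leq 0\}$, the right side is $L^p$-closed (since $\varphi^*_\mu\in L^{n/(n-1)} \hookrightarrow L^{p/(p-1)}$ for $p\geq n$), so it suffices to treat $f\in \mathcal{S}(F,\Omega)\cap L^p$ and $\mu\in\dvmeas$. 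Given $\varphi\geq 0$ in $W^{2,n}(\Omega)$ with $F(D^2\varphi,D\varphi,\varphi,x)\geq f$, I introduce the two-parameter test function $\tilde\varphi_{t,s} := \varphi + t\varphi^+_1 + s$, which lies in $W^{2,n}(\Omega)$ with $\tilde\varphi_{t,s}\geq s > 0$ on $\bar\Omega$. Combining \EQ{Fsuperlinear}, the homogeneity identity $F(tD^2\varphi^+_1,tD\varphi^+_1,t\varphi^+_1,x) = t\lambda^+_1\varphi^+_1 = 0$, and the $z$-Lipschitz estimate from \HYP{Felliptic} gives $F(D^2\tilde\varphi_{t,s},D\tilde\varphi_{t,s},\tilde\varphi_{t,s},x) \geq f - \consz s$. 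Extending the minimax bound $J(\mu,\cdot)\leq 0$ from $C^2_+(\bar\Omega)$ to positive $W^{2,n}$-functions by a routine mollification argument, I obtain
\begin{equation*}
\int_\Omega \frac{f - \consz s}{\varphi + t\varphi^+_1 + s}\,\varphi^+_1\,\varphi^*_\mu\,dx \leq 0.
\end{equation*}
Sending $s\to 0^+$ by dominated convergence (majorant $|f|\varphi^*_\mu/t \in L^1$ by H\"older), multiplying by $t$, and then sending $t\to\infty$ (majorant $|f|\varphi^*_\mu \in L^1$) produces $\int f\varphi^*_\mu\,dx \leq 0$.

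For the reverse inclusion I argue by contradiction using Hahn-Banach. If $f_0\in L^p\setminus\overline{\mathcal{S}}_p$, some $\psi\in L^{p/(p-1)}$ satisfies $\int f_0\psi\,dx > 0$ and $\int g\psi\,dx \leq 0$ for every $g\in\overline{\mathcal{S}}_p$. Testing against $g = -h \leq 0$ (which lies in $\mathcal{S}(F,\Omega)$, witnessed by $\varphi\equiv 0$ and the homogeneity identity $F(0,0,0,x)=0$) forces $\psi\geq 0$ a.e. To manufacture a measure $\mu\in\dvmeas$ with $\varphi^*_\mu$ proportional to $\psi$, fix $\varphi\in C^2_+(\bar\Omega)$, set $v_s := \varphi^+_1 + s$ and $w_{\alpha,s} := \varphi^\alpha v_s^{1-\alpha}$ for $\alpha, s\in(0,1)$, and apply \EQ{key-convexity} with $u=\varphi$, $v=v_s$, combined with the bound $F(D^2 v_s, Dv_s, v_s, x) \geq -\consz s$ from \HYP{Felliptic}, to get
\begin{equation*}
F(D^2 w_{\alpha,s}, Dw_{\alpha,s}, w_{\alpha,s}, x) \geq \alpha\,\frac{F(D^2\varphi, D\varphi, \varphi, x)}{\varphi}\,w_{\alpha,s} \;-\; (1-\alpha)\consz s\,\frac{w_{\alpha,s}}{v_s}.
\end{equation*}
The right-hand side is a bounded function that belongs to $\mathcal{S}(F,\Omega)\cap L^\infty$ (via the witness $w_{\alpha,s}$), so it integrates nonpositively against $\psi$. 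The elementary estimate $s\,w_{\alpha,s}/v_s \leq s^{1-\alpha}\|\varphi\|_\infty^\alpha$ lets me send $s\to 0^+$ (the ellipticity error vanishes), and subsequently $\alpha\to 0^+$ (using $w_\alpha := \varphi^\alpha(\varphi^+_1)^{1-\alpha}\to \varphi^+_1$ pointwise together with a uniform $L^\infty$ bound) by dominated convergence, to conclude
\begin{equation*}
\int_\Omega \frac{F(D^2\varphi, D\varphi, \varphi, x)}{\varphi}\,\varphi^+_1\,\psi\,dx \leq 0 \quad \text{for every } \varphi\in C^2_+(\bar\Omega).
\end{equation*}
With $c := \int \varphi^+_1 \psi\,dx > 0$ (positive since $\psi\geq 0$, $\psi\not\equiv 0$, and $\varphi^+_1 > 0$ on $\Omega$), the probability measure $d\mu := c^{-1}\varphi^+_1\psi\,dx$ satisfies $\sup_\varphi J(\mu,\varphi) \leq 0 = \lambda^+_1(F,\Omega)$, hence $\mu \in \dvmeas$ by \EQ{minimax-1} with $\varphi^*_\mu = c^{-1}\psi$ a.e.; thus $\int f_0\varphi^*_\mu\,dx = c^{-1}\int f_0\psi\,dx > 0$, the desired contradiction.

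The main obstacle is the two-parameter limiting argument around $w_{\alpha,s}$: one would like to use $w_\alpha = \varphi^\alpha(\varphi^+_1)^{1-\alpha}$ directly, but $D^2 w_\alpha$ grows like $d(x,\partial\Omega)^{-1-\alpha}$ near the boundary, so $w_\alpha \notin W^{2,n}$ and \EQ{key-convexity} is not directly applicable; the regularization $v_s = \varphi^+_1 + s$ is the fix, but it injects an ellipticity error $(1-\alpha)\consz s\,w_{\alpha,s}/v_s$ that must be controlled uniformly before $\alpha$ is taken small. The case $p=\infty$ is handled analogously by replacing Hahn-Banach in $L^{p/(p-1)}$ with the Riesz representation of $C(\bar\Omega)^*$, producing a separating Radon measure $\nu \geq 0$ in place of $\psi$; the same computation identifies $c^{-1}\varphi^+_1\,d\nu$ as a minimizing probability measure in $\dvmeas$, whose absolute continuity with density in $L^{n/(n-1)}$ is then forced by \THM{minimax-eigenvalue}, with any boundary part of $\nu$ automatically killed by the factor $\varphi^+_1$.
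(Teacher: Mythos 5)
Your proof is correct and follows essentially the same route as the paper's: the same Hahn--Banach/polar-cone duality, the same key-convexity test functions $\varphi^\alpha(\varphi^+_1+s)^{1-\alpha}$ with the double limit $s\to 0$ then $\alpha\to 0$ to show that any separating functional is a nonnegative multiple of some $\varphi^*_\mu$ with $\mu\in\dvmeas$, and the same perturbation of $\varphi^+_1$ by the witness $\varphi$ (your $\varphi+t\varphi^+_1+s$ is the paper's $\varphi^+_1+s\varphi$ with $t=1/s$) for the easy inclusion. The only organizational difference is that the paper runs the separation argument once, for Radon measures ($p=\infty$), and recovers the finite-$p$ cases from the nesting of the polar cones $E_n\subseteq E_p\subseteq E_\infty$, whereas you repeat it for each $p$; both work.
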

\begin{proof}
According to elementary Banach space theory, a closed convex cone is the intersection of the half-spaces that contain it. Thus we have
\begin{equation}\label{eq:Spcharacterization-duh}
\overline{\mathcal{S}}_p(F,\Omega) = \bigcap_{g \in E_p} \left\{ f \in L^p(\Omega) : \int_\Omega f g\, dx \leq 0 \right\}
\end{equation}
for each $n\leq p < \infty$, where we have defined
\begin{equation*}
E_p : = \left\{ g  \in L^{p/(p-1)}(\Omega) : \int_\Omega f g \, dx \leq 0 \ \mbox{for every} \ f\in \mathcal{S}(F,\Omega) \cap L^p(\Omega) \right\}.
\end{equation*}
Similarly, if we set
\begin{equation*}
E_\infty : = \left\{ \nu  \in \radonmeas : \int_\Omega f  \, d\nu \leq 0 \ \mbox{for every} \ f\in \mathcal{S}(F,\Omega) \cap C(\bar{\Omega}) \right\},
\end{equation*}
then
\begin{equation*}
\overline{\mathcal{S}}_\infty(F,\Omega) = \bigcap_{\nu \in E_\infty} \left\{ f \in C(\bar{\Omega}) : \int_\Omega f \, d\nu \leq 0 \right\}.
\end{equation*}
It is clear that $E_p \subseteq E_q$ for $n \leq p \leq q \leq \infty$, under the standard inclusion $L^p(\Omega) \subseteq \radonmeas$.

\smallskip

We claim that  
\begin{equation}\label{eq:Spcharacterization-wts}
E_\infty \subseteq \left\{ \nu \in \radonmeas : d\nu = c\varphi^*_\mu \,dx \ \mbox{for some} \ \mu\in \dvmeas, \ c\geq 0 \right\}.
\end{equation}
Select $\nu \in E_\infty$. Since $\mathcal{S}(F,\Omega)$ contains every nonpositive function in $C(\bar{\Omega})$, the measure $\nu \geq 0$. Assume that $\nu \not\equiv 0$. Define a probability measure $\mu\in \probmeas$ by 
\begin{equation*}
d\mu : = c^{-1} \varphi^+_1(x) \,d\nu, \quad c:= \int_\Omega \varphi^+_1 \, d\nu.
\end{equation*}
We will show that $\mu$ belongs to $\dvmeas$. Select a test function $\varphi \in C^2_+(\bar{\Omega})$. For each $0 < \alpha < 1$ and $\ep > 0$, define
\begin{equation*}
w_{\alpha,\ep} := \varphi^\alpha (\varphi^+_1 + \ep )^{1-\alpha},
\end{equation*}
and
\begin{equation*}
h_{\alpha,\ep}:= F(D^2w_{\alpha,\ep},Dw_{\alpha,\ep},w_{\alpha,\ep},x).
\end{equation*}
Then $w_{\alpha,\ep}\in C_+^2(\bar{\Omega})$, $h_{\alpha,\ep}\in C(\bar{\Omega})$, and by \EQ{key-convexity} we have
\begin{equation*}
h_{\alpha,\ep} \geq \left[ \alpha \frac{F(D^2\varphi,D\varphi,\varphi,x)}{\varphi} + (1-\alpha) \frac{F(D^2\varphi^+_1,D\varphi^+_1,\varphi^+_1+\ep ,x)}{\varphi^+_1+\ep} \right] w_{\alpha,\ep}.
\end{equation*}
Using $\nu \in E_\infty$ and \EQ{Fsuperlinear} we have
\begin{align*}
0 & \geq \frac{1}{\alpha} \int_\Omega h_{\alpha,\ep} \, d\nu \\
& \geq \int_\Omega \frac{F(D^2\varphi,D\varphi,\varphi,x)}{\varphi} w_{\alpha,\ep} \, d\nu + \frac{1-\alpha}{\alpha} \int_\Omega \frac{F(D^2\varphi^+_1,D\varphi^+_1,\varphi^+_1+\ep ,x)}{\varphi^+_1+\ep} w_{\alpha,\ep} \, d\nu \\
& \geq \int_\Omega \frac{F(D^2\varphi,D\varphi,\varphi,x)}{\varphi} w_{\alpha,\ep} \, d\nu + \frac{1-\alpha}{\alpha} \int_\Omega \lambda^+_1(F,\Omega) \frac{\varphi^+_1}{\varphi^+_1+\ep}w_{\alpha,\ep}\, d\nu \\
& \qquad \qquad - \frac{1-\alpha}{\alpha}\int_\Omega \frac{\consz \ep}{\varphi^+_1 + \ep} w_{\alpha,\ep} \, d\nu.
\end{align*}
Recall that $\lambda^+_1(F,\Omega) = 0$, and rearrange to obtain
\begin{align*}
\int_\Omega \frac{F(D^2\varphi,D\varphi,\varphi,x)}{\varphi} w_{\alpha,\ep} \, d\nu &  \leq  \frac{1-\alpha}{\alpha}\int_\Omega \frac{\consz \ep}{(\varphi^+_1 + \ep)^\alpha} \varphi^\alpha \, d\nu \\
& \leq \frac{1-\alpha}{\alpha}\int_\Omega \consz \ep^{1-\alpha} \varphi^\alpha \, d\nu.
\end{align*}
We may now send $\ep \to 0$ to deduce that
\begin{equation*}
\int_\Omega \frac{F(D^2\varphi,D\varphi,\varphi,x)}{\varphi} \varphi^\alpha (\varphi^+_1 )^{1-\alpha} \, d\nu \leq 0.
\end{equation*}
Now pass to the limit $\alpha \to 0$ to get
\begin{equation*}
\int_\Omega \frac{F(D^2\varphi,D\varphi,\varphi,x)}{\varphi}\, d\mu \leq 0.
\end{equation*}
It follows that $\mu \in \dvmeas$, as desired. We have demonstrated \EQ{Spcharacterization-wts}.

\smallskip

On the other hand, select that $\mu \in \dvmeas$ and $f\in \mathcal{S}(F,\Omega)$. We will show that
\begin{equation}\label{eq:S-characterization-2}
\int_\Omega f\varphi^*_\mu\, dx  \leq 0.
\end{equation}
Select $\varphi\in W^{2,n}(\Omega)$ such that $\varphi \geq 0$, and
\begin{equation*}
F(D^2\varphi,D\varphi,\varphi,x) \geq f \quad \mbox{in} \quad \Omega.
\end{equation*}
For $s > 0$, define $w_s:= \varphi^+_1 + s\varphi$, and notice that
\begin{align*}
0 & \geq \int_\Omega \frac{F(D^2w_s,Dw_s,w_s,x)}{w_s}\, d\mu \\
& \geq \int_\Omega \frac{F(D^2\varphi^+_1, D\varphi^+_1,\varphi^+_1,x) + sF(D^2\varphi,D\varphi,\varphi,x)}{\varphi^+_1 + s\varphi}\, d\mu\\
& \geq \int_\Omega \frac{sf}{\varphi^+_1 + s\varphi}\, d\mu.
\end{align*}
Thus
\begin{equation*}
\int_\Omega f \varphi^*_\mu \left( \frac{\varphi^+_1}{\varphi^+_1 + s\varphi } \right)\, dx \leq 0.
\end{equation*}
Now we may let $s\to 0$ to get \EQ{S-characterization-2}. We have shown that
\begin{equation*}
\left\{ c\varphi^*_\mu : \mu\in \dvmeas, \ c\geq 0 \right\} \subseteq E_n.
\end{equation*}
Combining with \EQ{Spcharacterization-wts}, we have that
\begin{equation*}
E_p = \left\{ c\varphi^*_\mu : \mu\in \dvmeas, \ c\geq 0 \right\}
\end{equation*}
for all $n\leq p < \infty$, as well as equality in \EQ{Spcharacterization-wts}. The result now follow from \EQ{Spcharacterization-duh}.
\end{proof}

The necessity of \EQ{necessary} for the solvability of the Dirichlet problem \EQ{DP} follows at once from Propositions \ref{prop:Sfomega-and-solvability} and \ref{prop:Spcharacterization}. The sufficiency of \EQ{plus-less-minus}  and \EQ{sufficient} is obtained from the next proposition.

\begin{prop}\label{prop:DP-sufficient}
Suppose that $\lambda^-_1(F,\Omega) > 0$, and $f\in C(\Omega) \cap L^n(\Omega)$ is such that
\begin{equation*}
\max_{\mu \in \dvmeas} \int_\Omega f \varphi^*_\mu \, dx < 0.
\end{equation*}
Then $f\in \mathcal{S}(F,\Omega)$. 
\end{prop}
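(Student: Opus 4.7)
The plan is to reduce to solving a slightly perturbed Dirichlet problem and then exploit the perturbation to build the required nonnegative supersolution. First, from the proof of \THM{minimax-eigenvalue} one has the uniform bound $\sup_{\mu\in\dvmeas} \|\varphi^*_\mu\|_{L^{n/(n-1)}(\Omega)} \leq \consABPgen$, which via H\"older's inequality also gives $\sup_{\mu\in\dvmeas} \int_\Omega \varphi^*_\mu\,dx \leq \consABPgen |\Omega|^{1/n}$. The strict inequality \EQ{sufficient} then lets me pick $\epsilon > 0$ so small that
\[ \max_{\mu\in\dvmeas} \int_\Omega (f + \consz\epsilon)\varphi^*_\mu\,dx < 0, \]
and set $g := f + \consz\epsilon \in C(\Omega)\cap L^n(\Omega)$, so that $g$ still satisfies the strict inequality.

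The core of the proof is to solve the Dirichlet problem $F(D^2 u, Du, u, x) = g$ with $u = 0$ on $\partial\Omega$. For each $\lambda \in (\lambda^+_1(F,\Omega) - 1, \lambda^+_1(F,\Omega))$ let $u_\lambda := \varphi^{\lambda,g}$ be the solution provided by \PROP{quantify-MP-lambda}; I would show that $u_\lambda$ stays uniformly bounded as $\lambda \nearrow \lambda^+_1(F,\Omega)$, so that a compactness argument via $C^\alpha$ and $W^{2,p}$ estimates furnishes the desired $u \in W^{2,n}(\Omega)\cap C(\bar\Omega)$ in the limit. Applying \THM{ABP-eigen} to $-u_\lambda$ and using the assumption $\lambda^-_1(F,\Omega) > 0$ gives immediately a uniform bound $\sup_\Omega u_\lambda^- \leq N$, so the decisive step is bounding $\sup_\Omega u_\lambda^+$. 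Arguing by contradiction, suppose $\|u_{\lambda_k}\|_{L^\infty} \to \infty$ along some $\lambda_k \nearrow \lambda^+_1(F,\Omega)$. Interior and boundary $C^\alpha$ estimates together with the simplicity of the principal eigenvalue (\THM{eigenvalues}) then force the normalized functions $u_{\lambda_k}/\|u_{\lambda_k}\|_{L^\infty}$ to converge uniformly on $\bar\Omega$ to $\alpha\,\efunpos/\|\efunpos\|_{L^\infty}$ for some $\alpha > 0$ along a subsequence. I would then apply the minimax inequality of \THM{minimax-eigenvalue} to the test function $w_{\beta,\epsilon'} := (u_{\lambda_k} + N + 1)^\beta (\efunpos + \epsilon')^{1-\beta}$, using the key convexity bound \EQ{key-convexity}; after dividing by $\beta$ and sending $\epsilon' \to 0$, multiplying by $\|u_{\lambda_k}\|_{L^\infty}$, and passing to the limit $k\to\infty$ via dominated convergence (with $L^1(d\mu)$ dominants furnished by the $L^{n/(n-1)}$ bound on $\varphi^*_\mu$ and the pointwise positivity of the rescaled limit), the resulting integral inequality, when applied to a $\mu\in\dvmeas$ attaining the maximum, should contradict $\max_{\mu\in\dvmeas}\int_\Omega g\,\varphi^*_\mu\,dx < 0$.

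Once $u \in W^{2,n}(\Omega) \cap C(\bar\Omega)$ solving $F(D^2u,Du,u,x) = g$ with $u = 0$ on $\partial\Omega$ is in hand, boundary $C^{1,\alpha}$ regularity for $u$ and Hopf's lemma applied to $\efunpos$ let me choose $k_0 > 0$ so large that $\varphi := u + k_0\,\efunpos$ is nonnegative on $\bar\Omega$. The superlinearity \EQ{Fsuperlinear}, combined with the eigenvalue identity $F(D^2(k_0\efunpos),D(k_0\efunpos),k_0\efunpos,x) = k_0\lambda^+_1(F,\Omega)\efunpos = 0$, yields
\[ F(D^2\varphi, D\varphi, \varphi, x) \geq F(D^2 u, Du, u, x) = g \geq f \inOmega, \]
so $\varphi$ witnesses $f \in \mathcal{S}(F,\Omega)$. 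The hard part will be the contradiction step in the middle paragraph: one must coordinate $\beta$, $\epsilon'$, and the rate of blow-up of $\|u_{\lambda_k}\|_{L^\infty}$ so that the error terms controlled by $\consz$ and $N$ vanish in the limit, leaving only the strict inequality on $\int_\Omega g\,\varphi^*_\mu\,dx$ to be violated.
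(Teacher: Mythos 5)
Your overall architecture --- perturb $f$ upward, solve the resonant problem as a limit of the $\varphi^{\lambda,g}$, then add a large multiple of $\efunpos$ to manufacture the nonnegative witness --- is reasonable, and your first and last paragraphs are sound. But the decisive middle step, the a priori bound on $\sup_\Omega u_\lambda^+$ by contradiction, fails for a structural reason. For $\mu\in\dvmeas$ the minimax formula supplies only the one-sided inequality $J(\mu,w)\leq\lambda^+_1$, while \EQ{key-convexity} bounds $J(\mu,w_{\beta,\ep'})$ from \emph{below} by the convex combination. Chaining these and carrying out your limits (with $\lambda^+_1=0$) yields
\[
\int_\Omega\frac{g-(\consz+\lambda_k)(N+1)}{(u_{\lambda_k}+N+1)/\|u_{\lambda_k}\|_{L^\infty(\Omega)}}\,d\mu\;\leq\;-\lambda_k\,\|u_{\lambda_k}\|_{L^\infty(\Omega)},
\]
that is, a \emph{lower} bound on $(\lambda^+_1-\lambda_k)\|u_{\lambda_k}\|_{L^\infty(\Omega)}$ whose left side converges to $\|\efunpos\|_{L^\infty(\Omega)}\int_\Omega\bigl(g-\consz(N+1)\bigr)\varphi^*_\mu\,dx$, a negative quantity. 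Since the right side is automatically nonnegative, the inequality is vacuously satisfied and no contradiction with $\int_\Omega g\varphi^*_\mu\,dx<0$ can be extracted. (The analogous linear argument works because one has the exact adjoint identity $\int (Lu)\varphi^*_1=\lambda_1\int u\varphi^*_1$; in the nonlinear setting only the inequality survives, and it points the wrong way.) What you actually need is the reverse estimate $\limsup_{\lambda\nearrow\lambda^+_1}(\lambda^+_1-\lambda)\|\varphi^{\lambda,g}\|_{L^\infty(\Omega)}\leq\max_{\mu}\int_\Omega g\varphi^*_\mu\,dx$, but in the paper that upper bound is obtained (in the last proposition of Section 3) precisely by invoking \PROP{DP-sufficient} to build a supersolution and comparing via \THM{BNV}; so your plan is circular at its core.

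The paper's proof avoids the PDE entirely: \PROP{Spcharacterization} identifies $\overline{\mathcal{S}}_p(F,\Omega)$ as the polar cone of $\{c\varphi^*_\mu:\mu\in\dvmeas,\ c\geq0\}$, so the strict inequality shows $f+\ep\in\overline{\mathcal{S}}_\infty(F,\Omega)$ when $f\in C(\bar\Omega)$, whence an $L^\infty$-approximant $g\in\mathcal{S}(F,\Omega)$ with $g\geq f$ exists and its witness also witnesses $f$; general $f\in C(\Omega)\cap L^n(\Omega)$ is then handled by truncation, \THM{ABP-eigen}, and the convex-cone structure of $\mathcal{S}(F,\Omega)$. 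Note also that the hypothesis $\lambda^-_1(F,\Omega)>0$ plays no role in the paper's argument for this proposition (it enters only when passing from $f\in\mathcal{S}(F,\Omega)$ to actual solvability), whereas your route leans on it for the lower bound $\sup_\Omega u_\lambda^-\leq N$. If you wish to keep your route, you must find an independent source for the upper a priori bound; the minimax inequality alone will not supply it.
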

\begin{proof}
Suppose first that $f\in C(\bar{\Omega})$. According to \PROP{Spcharacterization}, the function $f + \ep \in \overline{\mathcal{S}}_\infty(F,\Omega)$ for every sufficiently small $\ep >0$. Hence there exists $g \in \mathcal{S}(F,\Omega)$ such that $\| g - (f +\ep) \|_{L^\infty(\Omega)} \leq \ep$. In particular, $g \geq f$. Thus $f \in \mathcal{S}(F,\Omega)$.

For general $f\in C(\Omega) \cap L^n(\Omega)$, we may assume without of generality that $f$ is bounded below. Select $\alpha > 0$ small enough that
\begin{equation*}
\max_{\mu \in \dvmeas} \int_\Omega (f+\alpha) \varphi^*_\mu \, dx < 0.
\end{equation*}
For each $\ep > 0$, define
\begin{equation*}
f^\ep := f \wedge \ep^{-1}.
\end{equation*}
Then $f^\ep + \alpha \in \mathcal{S}(F,\Omega)$ for each $\ep > 0$. Fix $\ep > 0$ so small that
\begin{equation*}
\| f - f^\ep \|_{L^n(\Omega)} \leq \alpha / 2C_1,
\end{equation*}
where $C_1$ is as in \THM{ABP-eigen}. Consider the function $\varphi:= \varphi^{ -1 , f - f^\ep}$, which satisfies
\begin{equation*}
\left\{ \begin{aligned}
& F(D^2\varphi,D\varphi,\varphi,x) = - \varphi + f - f^\ep & \mbox{in} & \ \Omega, \\
& \varphi = 0 & \mbox{on} & \ \partial \Omega.
\end{aligned}\right.
\end{equation*}
According to \THM{ABP-eigen}, $0 \leq \varphi \leq 2C_1 \| f - f^\ep \|_{L^n(\Omega)} \leq \alpha$, and thus
\begin{equation*}
F(D^2\varphi,D\varphi,\varphi,x) \geq f - f^\ep -\alpha \quad \mbox{in} \quad \Omega.
\end{equation*}
Therefore, $f - f^\ep - \alpha \in  \mathcal{S}(F,\Omega)$. It follows that 
\begin{equation*}
f = (f^\ep + \alpha) + (f - f^\ep - \alpha) \in  \mathcal{S}(F,\Omega). \qedhere
\end{equation*}
\end{proof}

\begin{rem}
Suppose $u$ and $v$ are solutions of \EQ{DP} such that $u(\tilde{x}) > v(\tilde{x})$ at some point $\tilde{x}\in \Omega$. Using the concavity and homogeneity of $F$, we see that the function $w:=u-v$ satisfies
\begin{equation*}
F(D^2w,Dw,w,x) \leq \lambda^+_1(F,\Omega) w \quad \mbox{in} \ \Omega.
\end{equation*}
Comparing $w$ with $\varphi^+_1$ and applying \THM{BNV}, we deduce that $w\equiv t \varphi^+_1$ for some constant $t > 0$. Thus the difference of any two solutions of \EQ{DP} is a multiple of the principal eigenfunction, and in particular does not change sign in $\Omega$. According to the hypothesis \EQ{plus-less-minus} and \THM{ABP-eigen}, there exists a constant $C$ such that any solution of \EQ{DP} is bounded below by $-C$. Therefore, if \EQ{DP} is solvable, then it possesses a minimal solution.
\end{rem}

As an application of our techniques, we offer the following refinement of \EQ{reciprocal-integrable}:

\begin{prop}
Suppose $f \in C(\Omega) \cap L^n(\Omega)$ is nonnegative and let $\varphi^{\lambda,f}$ be as in \PROP{quantify-MP-lambda}. Then
\begin{equation}
\max_{\mu \in \dvmeas} \int_\Omega \frac{f}{\efunpos}\, d\mu = \lim_{\lambda \nearrow \lambda^+_1(F,\Omega)} \left( \lambda^+_1(F,\Omega) - \lambda \right) \| \varphi^{\lambda,f} \|_{L^\infty(\Omega)}.
\end{equation}
\end{prop}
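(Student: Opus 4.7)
The $\leq$ direction is essentially free: taking the maximum over $\mu \in \dvmeas$ in the inequality \EQ{reciprocal-integrable} established during the proof of \THM{minimax-eigenvalue} immediately gives
\begin{equation*}
M := \max_{\mu \in \dvmeas} \int_\Omega \frac{f}{\varphi^+_1}\,d\mu \leq \liminf_{\lambda \nearrow \lambda^+_1(F,\Omega)} (\lambda^+_1(F,\Omega)-\lambda)\|\varphi^{\lambda,f}\|_{L^\infty(\Omega)}.
\end{equation*}
The real content of the proposition is the reverse $\limsup$ bound, which I will obtain by explicitly constructing, for each $\lambda$ close to $\lambda^+_1(F,\Omega)$, a nonnegative supersolution of the Dirichlet problem \EQ{quantify-MP-lambda} of the right order and invoking the comparison principle of \REM{comparison}.

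Normalize so that $\lambda^+_1(F,\Omega)=0$ and $\|\varphi^+_1\|_{L^\infty(\Omega)}=1$, and set $h := f - M\varphi^+_1$. Since $\int \varphi^+_1 \varphi^*_\mu\,dx = \mu(\bar\Omega)=1$, we have $\int h \varphi^*_\mu\,dx \leq 0$ for every $\mu \in \dvmeas$, so \PROP{Spcharacterization} furnishes a sequence $h_k \in \mathcal{S}(F,\Omega)$ with $h_k \to h$ in $L^n(\Omega)$ and accompanying nonnegative $\psi_k \in W^{2,n}(\Omega)$ satisfying $F(D^2\psi_k,D\psi_k,\psi_k,x) \geq h_k$ in $\Omega$.

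The naive candidate $\frac{M}{-\lambda}\varphi^+_1 + \psi_k$ narrowly fails to be a supersolution of \EQ{quantify-MP-lambda}, since $\psi_k$ is only a supersolution for the $L^n$-approximant $h_k$, not for $h$. To repair this, I will add a correction $\theta_k := \varphi^{\lambda,(h-h_k)^+} \geq 0$ provided by \PROP{quantify-MP-lambda}, and test
\begin{equation*}
\Phi_k := \frac{M}{-\lambda}\varphi^+_1 + \psi_k + \theta_k.
\end{equation*}
Superadditivity \EQ{Fsuperlinear}, homogeneity \HYP{Fhomogeneous}, the identity $F(D^2\varphi^+_1,D\varphi^+_1,\varphi^+_1,x)=0$, and $\lambda\psi_k \leq 0$ together yield
\begin{equation*}
F(D^2\Phi_k,D\Phi_k,\Phi_k,x) \geq h_k + \lambda\theta_k + (h-h_k)^+ \geq h + \lambda\theta_k \geq \lambda\Phi_k + f \quad \mbox{in} \quad \Omega.
\end{equation*}
Since $\Phi_k \geq 0$ on $\partial\Omega$, \REM{comparison} gives $\varphi^{\lambda,f} \leq \Phi_k$ throughout $\Omega$. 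Estimating $\theta_k$ by \THM{ABP-eigen} then leads to
\begin{equation*}
(-\lambda)\|\varphi^{\lambda,f}\|_{L^\infty(\Omega)} \leq M + (-\lambda)\|\psi_k\|_{L^\infty(\Omega)} + \consABPgen((-\lambda)+1)\|h-h_k\|_{L^n(\Omega)}.
\end{equation*}
Sending $\lambda \nearrow 0$ for fixed $k$ and then $k \to \infty$ produces $\limsup(-\lambda)\|\varphi^{\lambda,f}\|_{L^\infty(\Omega)} \leq M$.

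The main obstacle is precisely this conversion of an $L^n$-approximation of $h$ into a sup-norm bound on $\varphi^{\lambda,f}$. The correction $\theta_k$ accomplishes the conversion, but only because the ABP estimate \THM{ABP-eigen} carries the explicit $(\lambda^+_1(F,\Omega)-\lambda)^{-1}$ dependence on the right-hand side: it is exactly this dependence that keeps $(-\lambda)\|\theta_k\|_{L^\infty(\Omega)}$ controlled by $\|h-h_k\|_{L^n(\Omega)}$ uniformly as $\lambda \nearrow 0$, allowing the limit in $\lambda$ to be taken before sending $k \to \infty$.
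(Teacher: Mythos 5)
Your proof is correct, and for the hard inequality it takes a genuinely different route from the paper's. Both arguments get $\max_{\mu}\int_\Omega f/\varphi^+_1\,d\mu \le \liminf(\lambda^+_1(F,\Omega)-\lambda)\|\varphi^{\lambda,f}\|_{L^\infty(\Omega)}$ for free from \EQ{reciprocal-integrable}. For the reverse bound the paper argues by contradiction: assuming $k<l$ (with $l$ the limsup), it invokes \PROP{DP-sufficient} to produce a positive supersolution $u$ of $F\ge \lambda^+_1 u + f-(k+\ep)\varphi^+_1+\delta$, uses the uniform convergence $(\lambda^+_1-\lambda_j)\varphi^{\lambda_j,f}\to l\varphi^+_1$ from \PROP{quantify-MP-lambda} to absorb the term $-(k+\ep)\varphi^+_1$ into the right-hand side, and then \THM{BNV} forces $\varphi^{\lambda_j,f}\le u$, contradicting the blow-up \EQ{varphi-lambda-f-blowup}. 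You instead build, for each $\lambda$, an explicit barrier $\Phi_k=\tfrac{M}{-\lambda}\varphi^+_1+\psi_k+\theta_k$ out of the $L^n$-characterization \PROP{Spcharacterization} applied to $h=f-M\varphi^+_1$, and compare directly. What your version buys: it is quantitative rather than by contradiction, it needs neither the convergence of the normalized $\varphi^{\lambda,f}$ nor the blow-up statement, and it bypasses \PROP{DP-sufficient} (whose statement carries the hypothesis $\lambda^-_1(F,\Omega)>0$, not assumed in the proposition); the price is the correction term $\theta_k$ and the bookkeeping of the $(\lambda^+_1(F,\Omega)-\lambda)^{-1}$ dependence in \THM{ABP-eigen}, which you correctly identify as the crux. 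Two small points to spell out in a full write-up: the identity $\int_\Omega\varphi^+_1\varphi^*_\mu\,dx=\mu(\bar\Omega)=1$, which is what makes $\int_\Omega h\varphi^*_\mu\,dx\le 0$; and the embedding $W^{2,n}(\Omega)\hookrightarrow C(\bar\Omega)$, so that $\|\psi_k\|_{L^\infty(\Omega)}<\infty$ and $(-\lambda)\|\psi_k\|_{L^\infty(\Omega)}\to 0$ for fixed $k$.
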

\begin{proof}
Assume without loss of generality that $f \not \equiv 0$. Define
\begin{equation*}
k := \max_{\mu \in \dvmeas} \int_\Omega \frac{f}{\efunpos}\,d\mu
\end{equation*}
and
\begin{equation*}
l := \limsup_{\lambda \nearrow \lambda^+_1(F,\Omega)} \left( \lambda^+_1(F,\Omega) - \lambda \right) \| \varphi^{\lambda,f} \|_{L^\infty(\Omega)}.
\end{equation*}
Suppose on the contrary that $k < l$. Select $0 < \ep \leq (l-k)/2$. According to \PROP{DP-sufficient}, there exists a supersolution $u> 0$ of
\begin{equation*}
F(D^2u,Du,u,x) \geq \lambda^+_1(F,\Omega) u + f - (k+\ep) \efunpos + \delta \inOmega.
\end{equation*}
provided $\delta > 0$ is sufficiently small. We may write \EQ{quantify-MP-lambda} as
\begin{equation*}
F(D^2 \varphi^{\lambda,f},D\varphi^{\lambda,f},\varphi^{\lambda,f},x) = \lambda^+_1(F,\Omega) + f - \left(\lambda^+_1(F,\Omega) - \lambda\right) \varphi^{\lambda,f} \inOmega.
\end{equation*}
Take a subsequence $\lambda_j \to \lambda^+_1(F,\Omega)$ such that
\begin{equation*}
l = \lim_{j\to \infty} \left( \lambda^+_1(F,\Omega) - \lambda_j \right) \| \varphi^{\lambda_j,f} \|_{L^\infty(\Omega)}.
\end{equation*}
According to \PROP{quantify-MP-lambda},
\begin{equation}
\left(\lambda^+_1(F,\Omega) - \lambda_j\right) \varphi^{\lambda_j,f} \rightarrow l\efunpos \quad \mbox{uniformly in} \quad \Omega.
\end{equation}
Hence for $j$ sufficiently large, 
\begin{equation*}
\left(\lambda^+_1(F,\Omega) - \lambda_j\right) \varphi^{\lambda_j,f} \geq (k+\ep) \varphi^+_1 - \delta/2.
\end{equation*}
Thus the function $w:=\varphi^{\lambda_j,f} - u$ satisfies the inequality
\begin{equation*}
F(D^2w,Dw,w,x) -\lambda^+_1(F,\Omega) w \leq - \delta/2 \inOmega.
\end{equation*}
Since $w \leq 0$ on $\partial \Omega$, we may apply \THM{BNV} to deduce that $w\leq 0$ in $\Omega$. Hence for $j$ sufficiently large,
\begin{equation*}
\varphi^{\lambda_j,f} \leq u \inOmega,
\end{equation*}
in contradiction to \EQ{varphi-lambda-f-blowup}. Hence $l \leq k$.
\end{proof}


\section{Examples}

\label{sec:examples}

Theorems \ref{thm:minimax-eigenvalue} and \ref{thm:Dirichlet-at-resonance} suggest that, in analogy with linear operators, we should interpret the functions $\varphi^*_\mu$ as one-sided ``principal eigenfunctions of the adjoint of the linearization of $F$." As we will see below, the set $\dvmeas$ is not a singleton set in general. However, if the operator $F$ is differentiable in $(M,p,z)$ at the point $\left( D^2\varphi^+_1(x),D\varphi^+_1(x), \varphi^+_1(x),x\right)$ for every $x\in \Omega$, and this derivative is continuous, then there is only one minimizing measure $\mu \in \dvmeas$ and $\varphi^*_\mu$ is the principal eigenfunction of the adjoint of the linearization of $F$ about $\varphi^+_1$, as we will now show.

To state this result precisely, we now introduce the standard notion of weak solution for linear elliptic equations in double divergence form, following Bauman \cite{Bauman:1984}. Suppose $L$ is the linear, uniformly elliptic operator given by \EQ{linearoperator}, with bounded, measurable coefficients. If $f\in L^\infty_{\mathrm{loc}}(\Omega)$, then we say a function $v\in L^1_{\mathrm{loc}}(\Omega)$ is a \emph{weak solution} of the adjoint equation
\begin{equation}\label{eq:adjointeq}
L^*v = f\quad \mbox{in} \ \Omega,
\end{equation}
provided the following integral identity holds for every smooth function $\psi$ with compact support in $\Omega$:
\begin{equation*}
\int_\Omega v L\psi \, dx = \int_\Omega f \psi \, dx.
\end{equation*}
A weak solution $v$ of \EQ{adjointeq} is formally a solution of the double divergence form PDE
\begin{equation*}
-\left( a^{ij}(x) v \right)_{ij} - \left( b^i(x) v \right)_i + c(x) v = f \inOmega.
\end{equation*}

Weak solutions of \EQ{adjointeq} do not possess much regularity; in fact, they need not be locally bounded. Since the inverse of $L$ is a compact linear operator on $L^n(\Omega)$, the inverse of the adjoint of $L$ is a compact linear operator on $L^{n/(n-1)}(\Omega)$. Thus a weak solution $v$ of \EQ{adjointeq} must necessarily belong to $L^{n/(n-1)}_{\mathrm{loc}}(\Omega)$, and belongs to $L^{(n/(n-1)}(\Omega)$ provided that $f$ does. 

\begin{prop}\label{prop:Fdiffadj}
Suppose that at every point $x\in \Omega$, the operator $F$ is differentiable in $(M,p,z)$ at\begin{equation*}
\left(D^2\efunpos(x),D\efunpos(x),\efunpos(x),x\right),
\end{equation*}
and this derivative is continuous in $x$. Let $L$ be the linear elliptic operator
\begin{multline}
L u := F_{m_{ij}}\left(D^2\efunpos(x),D\efunpos(x),\efunpos(x),x\right) u_{ij} \\ + F_{p_i}\left(D^2\efunpos(x),D\efunpos(x),\efunpos(x),x\right) u_j + F_z \left(D^2\efunpos(x),D\efunpos(x),\efunpos(x),x\right) u.
\end{multline}
Then the set $\dvmeas$ consists of a unique measure $\mu$ characterized by the fact the that $\varphi^*_\mu$ is the unique (suitably normalized) weak solution of the adjoint equation
\begin{equation} \label{eq:adjoint-equation}
L^* \varphi^*_\mu = \lambda^+_1(F,\Omega) \varphi^*_\mu \quad \mbox{in} \ \Omega.
\end{equation}
\end{prop}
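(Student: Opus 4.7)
The strategy is to linearize $F$ about $\efunpos$, identify the resulting operator $L$ as a linear uniformly elliptic operator of which $\efunpos$ is the principal eigenfunction at the same eigenvalue $\lambda^+_1(F,\Omega)$, and then, for any $\mu \in \dvmeas$, derive by a perturbation argument that $\efunadjpos{\mu}$ weakly solves the adjoint equation \EQ{adjoint-equation}. Uniqueness will then follow from the uniqueness (up to a positive scalar) of the principal eigenfunction of the adjoint of a linear uniformly elliptic operator.

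The identification of $L$ is immediate: by \HYP{Felliptic} the matrix $\bigl(F_{m_{ij}}(D^2\efunpos(x),D\efunpos(x),\efunpos(x),x)\bigr)$ lies in $\llbracket\gamma,\Gamma\rrbracket$ for every $x\in \Omega$, so $L$ is uniformly elliptic, and its coefficients are continuous by hypothesis. Combining positive homogeneity \HYP{Fhomogeneous} with differentiability yields Euler's identity
\begin{equation*}
L\efunpos(x) = F(D^2\efunpos(x),D\efunpos(x),\efunpos(x),x) = \lambda^+_1(F,\Omega)\efunpos(x),
\end{equation*}
so $\lambda^+_1(L,\Omega) = \lambda^+_1(F,\Omega)$ and $\efunpos$ is the principal eigenfunction of $L$.

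Fix $\mu \in \dvmeas$ and an arbitrary test function $\psi \in C^2(\bar{\Omega})$, and consider the two-parameter family
\begin{equation*}
\varphi_{\epsilon,t} := \efunpos + \epsilon(1 + t\psi), \qquad \epsilon > 0, \quad |t|\,\|\psi\|_{L^\infty(\Omega)} < 1.
\end{equation*}
Standard $C^{2,\alpha}$ boundary regularity for concave uniformly elliptic Dirichlet problems on smooth domains gives $\efunpos \in C^{2,\alpha}(\bar{\Omega})$, so each $\varphi_{\epsilon,t} \in \posfun$. The first-order Taylor expansion of $F$ at $(D^2\efunpos(x),D\efunpos(x),\efunpos(x),x)$, whose remainder is $o(\epsilon)$ uniformly in $x$ on account of the continuity of $DF$, yields
\begin{equation*}
F(D^2\varphi_{\epsilon,t},D\varphi_{\epsilon,t},\varphi_{\epsilon,t},x) = \lambda^+_1(F,\Omega)\efunpos(x) + \epsilon\, c(x) + \epsilon t\, L\psi(x) + o(\epsilon),
\end{equation*}
where $c(x) := F_z(D^2\efunpos(x),D\efunpos(x),\efunpos(x),x)$. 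Substituting into the minimality inequality $J(\mu,\varphi_{\epsilon,t}) \leq \lambda^+_1(F,\Omega)$, subtracting $\lambda^+_1(F,\Omega)$, dividing by $\epsilon>0$, and writing $d\mu = \efunpos\,\efunadjpos{\mu}\,dx$ produce
\begin{equation*}
\int_\Omega \bigl[(c - \lambda^+_1(F,\Omega)) + t(L\psi - \lambda^+_1(F,\Omega)\psi) + o(1)\bigr] \cdot \frac{\efunpos}{\efunpos + \epsilon(1 + t\psi)} \, \efunadjpos{\mu}\,dx \leq 0.
\end{equation*}
The kernel $\efunpos/(\efunpos + \epsilon(1 + t\psi))$ is bounded by $1$ and converges pointwise to $1$, so dominated convergence (using $\efunadjpos{\mu} \in L^1(\Omega)$) permits passage to $\epsilon \to 0^+$. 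The limiting inequality is affine in $t$ and holds on an open neighborhood of $0$; considering both signs of $t$ forces the linear coefficient to vanish, so that
\begin{equation*}
\int_\Omega (L\psi - \lambda^+_1(F,\Omega)\psi)\, \efunadjpos{\mu}\, dx = 0 \qquad \text{for every } \psi \in C^2(\bar{\Omega}).
\end{equation*}

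This is precisely the Bauman weak formulation of \EQ{adjoint-equation}, so $\efunadjpos{\mu}$ is a weak solution of $L^*v = \lambda^+_1(F,\Omega) v$. The principal eigenfunction of $L^*$ is unique up to a positive scalar multiple (by the Krein--Rutman theorem applied to the compact inverse of $L^*$ on $L^{n/(n-1)}(\Omega)$), and the normalization $\int_\Omega \efunpos\,\efunadjpos{\mu}\,dx = \mu(\bar{\Omega}) = 1$ fixes the scalar. Hence $\efunadjpos{\mu}$, and with it $\mu$, is uniquely determined. The step I expect to be most delicate is the uniform-in-$x$ Taylor expansion combined with the passage to the limit $\epsilon \to 0^+$: the vanishing of $\efunpos$ on $\partial\Omega$ and the potential unboundedness of $\efunadjpos{\mu}$ near the boundary are tamed precisely by the cancellation between $\efunpos$ in the numerator of the kernel and $\efunpos + \epsilon(1+t\psi)$ in its denominator, a cancellation made available by anchoring the perturbation family at $\efunpos$ itself.
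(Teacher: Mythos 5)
Your strategy --- linearize $F$ about $\efunpos$, use first-order optimality of $J(\mu,\cdot)$ at $\efunpos$, and identify the resulting Euler--Lagrange equation as the Bauman weak adjoint equation --- is exactly the paper's. But the execution has a genuine gap, and the gap actually produces a false intermediate claim.

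The problem is your perturbation family $\varphi_{\ep,t} = \efunpos + \ep(1+t\psi)$. After dividing by $\ep$ and passing to the limit $\ep\to 0^+$, the inequality you obtain is
\begin{equation*}
A + tB \leq 0 \quad\text{for } |t|\,\|\psi\|_{L^\infty(\Omega)} < 1, \qquad A := \int_\Omega \bigl(c-\lambda^+_1(F,\Omega)\bigr)\efunadjpos{\mu}\,dx,\quad B := \int_\Omega \bigl(L\psi - \lambda^+_1(F,\Omega)\psi\bigr)\efunadjpos{\mu}\,dx.
\end{equation*}
An affine function that is $\leq 0$ on a neighborhood of $t=0$ does \emph{not} have vanishing linear coefficient unless its constant term vanishes; in general it only gives $A\leq 0$ and $|B|\leq -A\,\|\psi\|_{L^\infty(\Omega)}$. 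And here $A$ is typically strictly negative: the $\ep$-regularization means the family is anchored at $\efunpos + \ep$, \emph{not} at $\efunpos$, and $J(\mu, \efunpos + \ep) < \lambda^+_1(F,\Omega)$ strictly for $\ep>0$, so after dividing by $\ep$ and passing to the limit the ``constant'' term survives. Concretely, take $F(M) = -\trace(M)$, so that $c\equiv 0$, and $\psi\equiv 1$. Then $L\psi - \lambda^+_1\psi \equiv -\lambda_1(-\Delta,\Omega) < 0$, and $\int_\Omega(L\psi - \lambda^+_1\psi)\efunadjpos{\mu}\,dx < 0$, contradicting your conclusion that $B=0$. So the identity you assert for general $\psi \in C^2(\bar\Omega)$ is simply false --- not merely unproved --- because for non-compactly-supported $\psi$ the integration by parts picks up Hopf boundary terms.

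The fix is exactly what the paper does: take $\psi$ smooth with \emph{compact support} in $\Omega$. Then $\efunpos + s\psi \in \posfun$ for all $|s|$ small, no $\ep$-regularization is needed, $J(\mu,\efunpos) = \lambda^+_1(F,\Omega)$ exactly (so the constant term vanishes), and $s\mapsto J(\mu,\efunpos + s\psi)$ has an honest interior maximum at $s=0$, whence its derivative there is zero. The dominated convergence needed to differentiate under the integral sign is supplied by the uniform Lipschitz bound on $F$ in $(M,p,z)$ from \HYP{Felliptic}, together with pointwise differentiability --- you do not need (and your hypotheses do not give) the Taylor remainder to be $o(\ep)$ uniformly in $x$. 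This yields precisely the Bauman weak formulation of \EQ{adjoint-equation} against compactly supported test functions, which is all that is required. The uniqueness argument you gave at the end (Krein--Rutman / Fredholm simplicity of the principal adjoint eigenvalue, combined with the normalization $\int_\Omega\efunpos\efunadjpos{\mu}\,dx = 1$) is fine.
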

\begin{proof}
According to \HYP{Felliptic}, our operator $F$ is Lipschitz in $(M,p,z)$, uniformly in $x\in \Omega$. Thus the coefficients 
\begin{gather*}
a^{ij} (x) := -F_{m_{ij}}\left(D^2\efunpos(x),D\efunpos(x),\efunpos(x),x\right), \\
b^i(x) := F_{p_j}\left(D^2\efunpos(x),D\efunpos(x),\efunpos(x),x\right),
\intertext{and}
c(x) = F_{z}\left(D^2\efunpos(x),D\efunpos(x),\efunpos(x),x\right)
\end{gather*}
belong to $C(\bar{\Omega})$. Select a measure $\mu \in \dvmeas$ and a smooth function $\psi$ with compact support in $\Omega$. There exists $\delta > 0$ such that function $\psi^s:= \varphi^+_1 + s\psi > 0$ in $\Omega$ for $s\geq -\delta$. The map $s \mapsto J(\mu, \psi^s)$ has a local maximum at $s =0$. Thus we obtain
\begin{align*}
0 & = \left. \frac{\partial }{\partial s} J(\mu,\psi^s) \right|_{s=0} \\
& = \int_\Omega \frac{\partial}{\partial s} \left. \left( \frac{ F(D^2\psi^s,D\psi^s,\psi^s,x) }{\psi^s} \right) \right|_{s=0} \, d\mu \\
& = \int_\Omega \left[ \frac{-a^{ij}(x)\psi_{ij} + b^i(x) \psi_i + c(x) \psi}{\varphi^+_1} - \frac{ F(D^2\efunpos(x),D\efunpos(x),\efunpos(x),x)}{(\varphi^+_1)^2} \psi \right] \, d\mu \\
& = \int_\Omega  \left( L\psi - \lambda^+_1(F,\Omega) \psi \right) \varphi^*_\mu \, dx.
\end{align*}
Differentiating under the integral sign can be justified using the dominated convergence theorem and the fact that $F$ is Lipschitz in $(M,p,z)$, uniformly in $x$. It follows that $\varphi^*_\mu$ is a weak solution of \EQ{adjoint-equation}. According to the Fredholm alternative, $\lambda^+_1(F,\Omega)$ is a simple eigenvalue of the operator $L^*$. Therefore, $\dvmeas = \{ \mu \}$.
\end{proof}

\begin{exam}\label{exam:nonunique-measures}
We now demonstrate that $\dvmeas$ is not a singleton set in general. Observe that if $F_1$ and $F_2$ satisfy \HYP{Fcontinuous}-\HYP{Fconcave} and 
\begin{equation*}
F_1(M,p,z,x) - \lambda_1^+(F_1,\Omega)z \leq F_2(M,p,z,x) - \lambda_1^+(F_2,\Omega)z,
\end{equation*}
then we may immediately deduce that $\varphi_1^+(F_1,\Omega) \equiv \varphi^+_1(F_2,\Omega)$, and
\begin{equation*}
\dvmeascust{F_2}{\Omega} \subseteq \dvmeascust{F_1}{\Omega}.
\end{equation*}
From this we see that the maximum of two linear operators with proportional principal eigenfunctions but disproportional adjoint eigenfunctions should have two distinct minimizing measures.

For an explicit example, consider the operator
\begin{equation*}
G(D^2u,Du,u,x) = \min\{ -\Delta u,-\Delta u + \mathbf{b}(x)\cdot Du\} - |u|, \quad x\in B.
\end{equation*}
Here $B$ is the unit ball in $\R^n$, and we require $\mathbf{b}:B\to \R^n$ to be a smooth vector field satisfying
\begin{equation}\label{eq:vfb1}
x\cdot \mathbf{b}(x) = 0,\quad x\in B,
\end{equation}
and
\begin{equation}\label{eq:vfb2}
\divg \mathbf{b} \not\equiv 0.
\end{equation}
Let $\lambda_1$ and $\varphi_1$ denote the principal eigenvalue and eigenfunction of $-\Delta$ in $B$, respectively. Since $\varphi_1$ is radial, we have $\mathbf{b} \cdot D\varphi \equiv 0$. It follows that
\begin{equation*}
\lambda_1: = \lambda^+_1(G,B) + 1 = \lambda^-_1(G,B) - 1,
\end{equation*}
and $\varphi_1 \equiv \varphi^+_1(G,B) \equiv - \varphi^-_1(G,B)$. Let $\varphi_2$ denote the principal eigenfunction of the operator
\begin{equation*}
L^*_1u := -\Delta u - \mathbf{b}\cdot Du - (\divg \mathbf{b}) u,
\end{equation*}
which is the adjoint of the operator $L_1:= -\Delta u + \mathbf{b}\cdot Du$. Owing to \EQ{vfb1} and \EQ{vfb2}, we see that the functions $\varphi_1$ and $\varphi_2$ are not proportional.

The measures
\begin{equation*}
\mu_1 := \varphi_1^2\,dx / \| \varphi_1 \|_{L^2(\Omega)}^2 \quad \mbox{and} \quad \mu_2 := \varphi_1\varphi_2\, dx / \| \varphi_1 \varphi_2\|_{L^1(\Omega)}
\end{equation*}
belong to $\dvmeascust{G}{B}$. To see this, notice that $\mu_1\in \dvmeascust{-\Delta}{B}$ and $\mu_2\in \dvmeascust{L_1}{B}$, and apply the argument above. Alternatively, let $u\in C^2_+(\bar{\Omega})$ and check that
\begin{equation*}
\int_\Omega \frac{G(D^2u,Du,u,x)}{u}\,d\mu_1 \leq \int_\Omega \frac{-\Delta u - u}{u}\, d\mu_1 \leq \lambda_1 - 1 = \lambda^+_1(G,B).
\end{equation*}
Hence $\mu_1\in \dvmeascust{G}{B}$. Similarly, $\mu_2\in \dvmeascust{G}{B}$. 
\end{exam}

We conclude by demonstrating that in general the necessary condition \EQ{necessary} is not sufficient, nor is the sufficient condition \EQ{sufficient} necessary for the solvability of the boundary value problem \EQ{DP}. 

\begin{exam}\label{exam:nssn}
Consider the operator
\begin{equation*}
G(D^2u) := \min \left\{ -\Delta, -2\Delta \right\}
\end{equation*}
Observe that in any domain $\Omega$,
\begin{equation*}
\lambda_1:=\lambda^+_1(G,\Omega) = \lambda_1(-\Delta,\Omega) < 2\lambda_1(-\Delta,\Omega) = \lambda^-_1(G,\Omega)
\end{equation*}
and
\begin{equation*}
\varphi^+_1(G,\Omega) \equiv \varphi_1(-\Delta,\Omega) \equiv -\varphi^-_1(G,\Omega) =: \varphi_1.
\end{equation*}
According to \PROP{Fdiffadj},
\begin{equation*}
\dvmeascust{G}{\Omega} = \{ \mu \},
\end{equation*}
where $\mu$ is the probability measure given by
\begin{equation*}
d\mu(x) =  (\varphi_1(x) )^2\, dx / \| \varphi_1 \|_{L^2(\Omega)}^{2}.
\end{equation*}
Suppose that $f\in C^{1,\alpha}(\Omega)$ is such that
\begin{equation*}
\max_{\mu \in \dvmeascust{G}{\Omega}} \int_\Omega f\varphi^*_\mu = \int_\Omega f\varphi_1 \, dx = 0.
\end{equation*}
Then there is a solution $u$ of the Dirichlet problem
\begin{equation}\label{eq:examnssnD}
\left\{ \begin{aligned}
& -\Delta u =  \lambda_1 u + f & \mbox{in} & \ \Omega, \\
& u = 0 & \mbox{on} & \ \partial \Omega,
\end{aligned}\right.
\end{equation}
which is unique up to multiples of $\varphi_1$. We will argue that a solution $v$ of the Dirichlet problem
\begin{equation}\label{eq:examnssnG}
\left\{ \begin{aligned}
& G(D^2v) =  \lambda_1 v + f & \mbox{in} & \ \Omega, \\
& v = 0 & \mbox{on} & \ \partial \Omega,
\end{aligned}\right.
\end{equation}
exists if and only if
\begin{equation*}
f \geq 0 \quad \mbox{on} \ \partial \Omega.
\end{equation*}
If $v$ is a viscosity solution of \EQ{examnssnG}, then $v\in C^{2,\alpha}(\Omega)$. Define
\begin{equation*}
g: = -\Delta v -\lambda_1 v,
\end{equation*}
and notice that
\begin{equation*}
\int_\Omega g\varphi_1\, dx = 0
\end{equation*}
as well as
\begin{equation*}
g \geq G(D^2v) - \lambda_1 v = f \quad \mbox{in} \ \Omega.
\end{equation*}
It follows that $g\equiv f$ and $G(D^2v) = -\Delta v$. In particular, we conclude that $v$ is superharmonic in $\Omega$. Thus
\begin{equation*}
f = -\Delta v - \lambda_1 v = -\Delta v \geq 0 \quad \mbox{on} \ \partial \Omega.
\end{equation*}
On the other hand, suppose that $f \geq 0$ on $\partial \Omega$. Then a solution $u$ of \EQ{examnssnD} satisfies
\begin{equation*}
-\Delta u \geq 0 \quad \mbox{on} \ \partial \Omega.
\end{equation*}
Using Hopf's Lemma and $u\in C^3(\bar{\Omega})$, we may select $k \geq 0$ so large that the function $w_k:=u + k\varphi_1$ is superharmonic in $\Omega$. It follows that $w_k$ satisfies the boundary value problem \EQ{examnssnG}.
\end{exam}


\appendix

\section{Proof of \THM{ABP-eigen}} \label{app:app}

We assume that $F$ satisfies \HYP{Fcontinuous}, \HYP{Felliptic}, and \HYP{Fhomogeneous}. We do \emph{not} assume \HYP{Fconcave}. While this is not necessary for our purposes, the added generality will make our argument more clear in addition to providing a useful generalization of the result in \cite{Quaas:2008} to the case of non-concave $F$. Our proof will follow that of \cite{Berestycki:1994} for the linear case.

\begin{lem}\label{lem:eigenvalues-est-SD}
Suppose that $\Omega$ contains a ball $B$ of radius $R \leq 1$. Then we have the estimate
\begin{equation} \label{eq:eigenvalues-est-SD}
\consz + \lambda^+_1(F,\Omega) \leq \consEESD R^{-2},
\end{equation}
where $\consEESD$ depends only on $n$, $\gamma$, $\Gamma$, and $\consp$.
\end{lem}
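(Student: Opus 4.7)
The strategy is a two-step reduction: first to the ball $B$ via strict domain monotonicity, and then to the fixed unit ball via rescaling after dominating $F$ by a Pucci-type operator whose principal half-eigenvalue on $B_1$ can be bounded explicitly.

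By \eqref{eq:eigenvalues-monotonic-in-domain}, $\lambda^+_1(F,\Omega) \leq \lambda^+_1(F,B)$, so it suffices to bound $\lambda^+_1(F,B)$; I may assume $B$ is centered at the origin. Taking $N = q = 0$ and $w = 0$ in \HYP{Felliptic} and using $F(0,0,0,x) = 0$ (from \HYP{Fhomogeneous}) yields the pointwise bound
\[
F(M,p,z,x) \leq \Puccisub{\gamma}{\Gamma}(M) + \consp|p| + \consz z \qquad (z \geq 0).
\]
Denoting the right-hand side by $G(M,p,z)$, which satisfies \HYP{Fcontinuous}--\HYP{Fhomogeneous}, the maximin formula \eqref{eq:maximin-formula-2} applied to positive test functions immediately gives $\lambda^+_1(F,B) \leq \lambda^+_1(G,B)$.

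The advantage of $G$ is that its eigenvalue problem rescales cleanly: setting $\tilde\varphi(y) := \varphi^+_1(G,B)(Ry)$ for $y \in B_1$, the positive homogeneity of $\Puccisub{\gamma}{\Gamma}$ shows that $\tilde\varphi$ is the positive principal eigenfunction on $B_1$ of
\[
\tilde G(M,p,z) := \Puccisub{\gamma}{\Gamma}(M) + \consp R\,|p| + \consz R^2\,z,
\]
with eigenvalue $R^2\lambda^+_1(G,B)$. Since adding a linear multiple of $z$ shifts the eigenvalue by the same constant, everything reduces to the bound
\[
\lambda^+_1\!\bigl(\Puccisub{\gamma}{\Gamma}(\cdot) + \consp R\,|\cdot|,\,B_1\bigr) \leq C_0
\]
for some $C_0 = C_0(n,\gamma,\Gamma,\consp)$, which is independent of $R \leq 1$ because $\consp R \leq \consp$.

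This last estimate is the main obstacle. I would prove it by constructing an explicit positive function $\psi \in C^2(\bar B_1)$ with $\psi > 0$ in $B_1$, $\psi = 0$ on $\partial B_1$, and
\[
\Puccisub{\gamma}{\Gamma}(D^2\psi(y)) + \consp|D\psi(y)| \leq C_0\,\psi(y) \quad \text{in } B_1,
\]
for instance a suitable concave radial profile $\psi(y) = f(|y|)$. If $C_0$ were strictly less than the eigenvalue, then by \REM{comparison} the operator $\Puccisub{\gamma}{\Gamma}(\cdot) + \consp|\cdot| - C_0\,\mathrm{id}$ would satisfy the comparison principle in $B_1$; applied to $\psi$ and the zero function (which share zero boundary data), comparison would force $\psi \leq 0$ in $B_1$, contradicting $\psi > 0$. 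Reassembling this bound with the scaling and the zeroth-order shift above then produces the claimed inequality $\consz + \lambda^+_1(F,\Omega) \leq \consEESD R^{-2}$.
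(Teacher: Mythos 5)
Your reduction scheme---domain monotonicity, domination of $F$ by $G(M,p,z)=\Puccisub{\gamma}{\Gamma}(M)+\consp|p|+\consz z$ via \HYP{Felliptic} and $F(0,0,0,x)=0$, and rescaling to the unit ball to isolate the factor $R^{-2}$---is sound, and it is organized differently from the paper, which works directly on $B=B_R$ with a single explicit barrier $\varphi(x)=\frac14(R^2-|x|^2)^2$ whose computation produces the $R^{-2}$ by hand. However, the step you yourself flag as ``the main obstacle'' is precisely the entire content of the paper's proof, and the one concrete suggestion you make for it cannot work: \emph{no concave radial profile can serve as the barrier}. If $\psi$ is concave then $D^2\psi\leq 0$, so $\Puccisub{\gamma}{\Gamma}(D^2\psi)\geq -\gamma\trace(D^2\psi)\geq 0$, and the desired inequality $\Puccisub{\gamma}{\Gamma}(D^2\psi)+\consp|D\psi|\leq C_0\psi$ then forces $|D\psi|\leq (C_0/\consp)\,\psi\to 0$ at $\partial B_1$; a concave function vanishing to first order at a boundary point lies below its tangent plane there, whence $\psi\leq 0$ throughout $B_1$, contradicting $\psi>0$. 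The paper's quartic is engineered exactly to evade this: near $\partial B$ the Hessian $-(R^2-|x|^2)\imat+2x\otimes x$ has a \emph{positive} radial eigenvalue, so $\Puccisub{\gamma}{\Gamma}(D^2\varphi)\leq n\Gamma(R^2-|x|^2)-2\gamma|x|^2$ becomes strictly negative in a boundary layer and the inequality closes there for free, while in the interior one uses $\varphi\gtrsim R^4$. Any completed version of your argument needs a barrier with this radially-convex-near-the-boundary structure---or must bypass the barrier entirely, e.g.\ by noting that after your rescaling it suffices to know that $\lambda^+_1\bigl(\Puccisub{\gamma}{\Gamma}(\cdot)+\consp|\cdot|,B_1\bigr)$ is a finite number depending only on $n,\gamma,\Gamma,\consp$, which follows from \THM{eigenvalues}.

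Two further points. First, your appeal to \REM{comparison} is circular in the paper's logical order: that remark is justified by \THM{ABP-eigen}, which is proved in the appendix \emph{using} \LEM{eigenvalues-est-SD}. The correct tool, and the one the paper uses, is \THM{BNV}, which is an external input here: compare $s\psi$ (a positive subsolution of $G(D^2u,Du,u,x)\leq C_0 u$ vanishing on $\partial B_1$) with the principal eigenfunction for arbitrary $s>0$ to conclude $\lambda^+_1\leq C_0$. Second, a bookkeeping issue: assembling your chain gives $\lambda^+_1(F,\Omega)\leq C_0R^{-2}+\consz$, so after absorbing the leftover $\consz$ using $R\leq 1$ the constant in \EQ{eigenvalues-est-SD} acquires a dependence on $\consz$, which does not literally match the stated dependence on $n,\gamma,\Gamma,\consp$ alone (the final display of the paper's own proof has the same wrinkle, and the discrepancy is harmless for the lemma's use downstream, but it is worth being aware of).
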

\begin{proof}
We will suppose with loss of generality that $B$ is centered at $x=0$. Consider the auxiliary function $\varphi$ given by
\begin{equation*}
\varphi(x) := \frac{1}{4} \left( R^2 - |x|^2 \right)^2.
\end{equation*}
Performing a routine calculation, we see that
\begin{equation*}
D\varphi(x) = -\left( R^2 - |x|^2\right) x,\quad D^2\varphi(x) = -\left( R^2 - |x|^2\right) \imat + 2x\otimes x. 
\end{equation*}
In the ball $B$ we have
\begin{align*}
\Pucci(D^2\varphi) + \consp |D\varphi| & \leq n\Gamma \left( R^2 - |x|^2\right) - 2\gamma |x|^2 + \consp  \left( R^2 - |x|^2\right)|x| \\
& \leq 4\beta \varphi \left[ (n\Gamma + \consp) - 2\gamma |x|^2 \beta \right],
\end{align*}
where we have defined $\beta := (R^2 - |x|^2)^{-1}$. Let $0 < \alpha \leq \frac{1}{2}$ be a constant to be selected below. In the region $(1-\alpha) R^2 \leq |x|^2 < R \leq 1$, we have $\beta \geq \alpha^{-1} R^{-2}$, and
\begin{align*}
\Pucci(D^2\varphi) + \consp|D\varphi| & \leq 4 \beta \varphi \left[ (n\Gamma + \consp) -2\gamma|x|^2\beta \right] \\
& \leq 4\beta \varphi \left[ (n\Gamma + \consp) - 2\gamma \alpha^{-1} \right] \\
& \leq 0,
\end{align*}
provided that we choose 
\begin{equation*}
\alpha :=  \min \left\{ \textstyle{\frac{1}{2}}, 2\gamma(n\Gamma + \consp)^{-1} \right\}.
\end{equation*}
In the region $0 \leq |x| < (1-\alpha)R^2$, we have $\beta \leq \alpha^{-1} R^{-2}$, and thus
\begin{align*}
\Pucci(D^2\varphi(x)) + \consp|D\varphi(x) & \leq 4 \beta \varphi (n\Gamma + \consp) \\
& \leq 4 (n\Gamma + \consp) \alpha^{-1} R^{-2} \varphi \\
& = : \consEESD R^{-2} \varphi.
\end{align*}
In summary, we have shown that
\begin{equation*}
\Pucci(D^2\varphi) + \consp|D\varphi| \leq \consEESD R^{-2} \varphi \quad \mbox{in} \ B. 
\end{equation*}
Since $\varphi = 0$ on $\partial B$, it follows from \THM{BNV} that
\begin{equation*}
\lambda^+_1(\Pucci(D^2\cdot) + \consp|D\cdot|, B) \leq \consEESD R^{-2}.
\end{equation*}
Thus we have
\begin{equation*}
\consz + \lambda^+_1(F,\Omega) \leq \consz + \lambda^+_1(F,B) = \lambda^+_1(F + \consz,B) \leq \lambda^+_1(\Pucci(D^2\cdot) + \consp|D\cdot|, B) \leq \consEESD R^{-2},
\end{equation*}
as desired.
\end{proof}

\begin{lem}\label{lem:positive-supersolution}
For any $\lambda < \lambda^+_1(F,\Omega)$, there exists $v \in C^{1,\alpha}(\Omega)$ satisfying
\begin{equation}\label{eq:positive-supersolution-eq}
\left\{ \begin{aligned}
& F(D^2v,Dv,v,x) \geq \lambda v + 1 &  \mbox{in} & \ \Omega,\\
& v \geq 1 & \mbox{in} & \ \Omega,
\end{aligned} \right.
\end{equation}
as well as the estimate
\begin{equation}\label{eq:positive-supersolution-est}
\sup_\Omega v  \leq C \left( 1 + (\lambda^+_1(F,\Omega) - \lambda)^{-1} \right),
\end{equation}
where the constant $C$ depends only on $\Omega$, $n$, $\gamma$, $\Gamma$, and $\consp$. Moreover, we may take $v\in C^{1,\alpha}(\Omega)$, and if $F$ is concave \HYP{Fconcave}, then we may take $v\in C^{2,\alpha}(\Omega)$.
\end{lem}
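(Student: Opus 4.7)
My plan is to take the solution $\psi := \varphi^{\lambda,1}$ of the Dirichlet problem $F(D^2\psi,D\psi,\psi,x) = \lambda\psi + 1$ in $\Omega$, $\psi=0$ on $\partial\Omega$, produced by \PROP{quantify-MP-lambda}, and then set $v := K\psi + 1$ for a suitable constant $K\geq 1$. Since $F(0,0,0,\cdot) = 0 \leq 1$ by \HYP{Fhomogeneous}, the zero function is a subsolution of the inhomogeneous equation, and the Perron construction sketched in \PROP{quantify-MP-lambda} (with large multiples of $\varphi^\pm_1$ as barriers) preserves the lower bound $\psi \geq 0$. Choosing $K := \max\{1,\, 1+\lambda+\consz\}$, so that $v \geq 1$, I would use \HYP{Felliptic} to absorb the $z$-shift by $1$ and \HYP{Fhomogeneous} to scale by $K$ to obtain
\begin{equation*}
F(D^2v,Dv,v,x) \geq F(KD^2\psi,KD\psi,K\psi,x) - \consz = K(\lambda\psi+1) - \consz = \lambda v + 1 + \bigl[K-(1+\lambda+\consz)\bigr] \geq \lambda v + 1,
\end{equation*}
so that $v$ is the desired viscosity supersolution.

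For the size estimate, \LEM{eigenvalues-est-SD} applied to a ball $B\subseteq\Omega$ of radius $R$ bounded below gives $\lambda+\consz \leq \lambda^+_1(F,\Omega)+\consz \leq \consEESD R^{-2}$, a quantity depending only on $\Omega, n, \gamma, \Gamma, \consp$; hence $K$ is likewise bounded by a constant of the same dependence. Consequently $\sup_\Omega v \leq K\sup_\Omega\psi + 1$, and the required estimate \EQ{positive-supersolution-est} reduces to
\begin{equation*}
\sup_\Omega \psi \leq C\bigl(1 + (\lambda^+_1(F,\Omega)-\lambda)^{-1}\bigr).
\end{equation*}

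This $L^\infty$ bound on $\psi$ is the main obstacle; I would prove it by contradiction and compactness. If the bound fails along $\lambda_k < \lambda^+_1(F,\Omega)$, either $\lambda_k$ stays bounded away from $\lambda^+_1(F,\Omega)$---in which case normalizing $\psi_{\lambda_k}$ by its $L^\infty$ norm and invoking the interior $C^\alpha$ estimates of \cite{Caffarelli:Book} produces in the limit a nonnegative principal eigenfunction of $F$ in $\Omega$ with eigenvalue strictly less than $\lambda^+_1(F,\Omega)$, contradicting the uniqueness clause of \THM{eigenvalues}---or else $\lambda_k \nearrow \lambda^+_1(F,\Omega)$, in which case $w_k := (\lambda^+_1(F,\Omega)-\lambda_k)\psi_{\lambda_k}$ satisfies, by \HYP{Fhomogeneous}, $F(D^2w_k,Dw_k,w_k,x) = \lambda_k w_k + (\lambda^+_1(F,\Omega)-\lambda_k)$ with zero boundary values and unbounded $L^\infty$ norm. \PROP{quantify-MP-lambda} then identifies the rescaled limit with a multiple of $\varphi^+_1$, and testing the maximin formula \EQ{maximin-formula-2} against $w_k+\epsilon$ (using the $\consz$-Lipschitz dependence of $F$ in $z$ from \HYP{Felliptic}) and sending $\epsilon\to 0$ pins down the admissible growth rate of $\|w_k\|_{L^\infty}$ sharply enough to produce the contradiction. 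The $C^{1,\alpha}$ regularity of $v$ then follows from the interior $W^{2,p}$ estimates for $\psi$ in \cite{Winter:2008}, and under \HYP{Fconcave} the Evans--Krylov theory recalled in \cite{Caffarelli:Book} yields $v \in C^{2,\alpha}_{\mathrm{loc}}(\Omega)$. The sharp quantitative compactness step in the case $\lambda_k\nearrow\lambda^+_1(F,\Omega)$ is the delicate point; the rest is direct bookkeeping with \HYP{Felliptic}, \HYP{Fhomogeneous}, and \LEM{eigenvalues-est-SD}.
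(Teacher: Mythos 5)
There is a genuine gap, and it is structural: your construction is circular. You build $v$ from $\psi:=\varphi^{\lambda,1}$, whose existence, uniqueness, and nonnegativity you take from \PROP{quantify-MP-lambda}. But in this paper \PROP{quantify-MP-lambda} rests on the comparison principle of \REM{comparison} and on \THM{ABP-eigen} (both for the approximation step in the existence argument and for uniqueness), and \THM{ABP-eigen} is proved in the appendix \emph{from} \LEM{positive-supersolution}. The lemma sits in the appendix precisely because it is the engine behind the quantified ABP inequality; it cannot invoke the solvability theory that the ABP inequality underwrites. It must be proved from the primitive ingredients only: \THM{BNV}, the classical ABP inequality for the Pucci operators, the Harnack inequality, and \LEM{eigenvalues-est-SD}.

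Even granting $\psi$, your remaining task --- the bound $\sup_\Omega\psi\leq C\bigl(1+(\lambda^+_1(F,\Omega)-\lambda)^{-1}\bigr)$ --- \emph{is} the estimate \EQ{ABP-eigen} with $f\equiv 1$, i.e.\ exactly what the lemma is designed to produce, and your compactness-and-contradiction scheme does not deliver it. In the regime $\lambda_k\nearrow\lambda^+_1(F,\Omega)$, normalizing and passing to a limit only shows that $(\lambda^+_1(F,\Omega)-\lambda_k)\psi_{\lambda_k}$ converges (after renormalization) to a multiple of $\varphi^+_1$; no contradiction arises from unboundedness of $\|w_k\|_{L^\infty}$ unless you already control the rate, and the "testing the maximin formula against $w_k+\epsilon$" step you gesture at is precisely the content of Proposition 3.6 of the paper, which is proved \emph{later} using \THM{ABP-eigen} and the minimax machinery. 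The paper's proof avoids all of this by an explicit barrier: it solves $\pucci(D^2u)-\consp|Du|=g$ for a cutoff $g$ supported in a thin boundary layer $\Omega\setminus\Omega_1$ chosen so small (via \LEM{eigenvalues-est-SD}) that the classical ABP inequality makes $u$ uniformly small, sets $w:=1+\beta u$ to get a strict supersolution near $\partial\Omega$, and then adds $A\varphi^+_1$ with $A=2(1+\eta)/\bigl(c(\lambda^+_1(F,\Omega)-\lambda)\bigr)$, where $c$ is the Harnack lower bound for $\varphi^+_1$ on the interior compact set. The factor $(\lambda^+_1(F,\Omega)-\lambda)^{-1}$ in \EQ{positive-supersolution-est} then appears explicitly in the choice of $A$, with no compactness argument and no appeal to the Dirichlet solvability theory. (Your algebra for passing from $\psi$ to $K\psi+1$ via \HYP{Felliptic} and \HYP{Fhomogeneous} is fine; it is the provenance of $\psi$ and of its $L^\infty$ bound that fails.)
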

\begin{proof}
According to \LEM{eigenvalues-est-SD}, there exists a constant $\eta >0$, depending only on $n$, $\gamma$, $\Gamma$, $\consp$, and the geometry of the domain $\Omega$, such that $\consz + \lambda^+_1(F,\Omega) \leq \eta$. Select smooth domains $\Omega_1 \subset \subset \Omega_2 \subset \subset \Omega_3 \subset \subset  \Omega$ such that
\begin{equation*}
| \Omega \backslash \Omega_1 | \leq \left( 2\consABP \eta \right)^{-n},
\end{equation*}
where $\consABP$ is the constant in the ABP inequality (see \cite[Proposition 2.12]{Caffarelli:1996}). Select a smooth function $g$ such that $0\leq g \leq 1$, $g\equiv 0$ on $\Omega_1$, and $g\equiv 1$ on $\Omega \backslash \Omega_2$. Let $u$ be the unique viscosity solution of the boundary value problem
\begin{equation}
\left\{ \begin{aligned}
& \pucci(D^2u) - \consp |Du| = g & \mbox{in} & \ \Omega, \\
& u = 0 & \mbox{on} & \ \partial \Omega.
\end{aligned} \right.
\end{equation}
Then $u \in C^{2,\alpha}(\Omega)$, and according to the ABP inequality,
\begin{equation*}
0\leq u \leq \consABP \| g \|_{L^n(\Omega)} \leq \consABP | \Omega \backslash \Omega_1|^{1/n} \leq \frac{1}{2\eta} \quad \mbox{in} \ \Omega.
\end{equation*}
Define $w:= 1+ \beta u$ for $\beta := 2(1+\eta)$. Then in the set $\Omega \backslash \Omega_2$, the function $w$ satisfies
\begin{align*}
\pucci(D^2w) - \consp|Dw| - (\lambda+ \consz)w \geq \beta  - (\lambda + \consz) \left( 1 + \frac{\beta}{2\eta}\right) \geq 1.
\end{align*}
According to the Harnack inequality (see \cite[Theorem 3.6]{Quaas:2008}), there exists $c>0$, depending on the appropriate constants and the geometry of $\Omega$, such that the positive principal eigenfunction $\varphi^+_1$ of $F$ satisfies
\begin{equation*}
\varphi^+_1 \geq c \quad \mbox{on} \ \bar{\Omega}_3
\end{equation*}
Define $v:=w + A \varphi^+_1$, where $A> 0$ will be selected below. In the set $\Omega \backslash \Omega_2$, we have
\begin{align*}
F(D^2v,Dv,v,x) - \lambda v & \geq A \left( F(D^2\varphi^+_1,D\varphi^+_1,\varphi^+_1,x) - \lambda\varphi^+_1\right) +  \pucci(D^2w) - \consp|Dw| - (\lambda+ \consz)w \\
& \geq 1.
\end{align*}
In the set $\Omega_3$, the function $v$ satisfies 
\begin{align*}
\lefteqn{F(D^2v,Dv,v,x) - \lambda v} \qquad & \\
& \geq A \left( F(D^2\varphi^+_1,D\varphi^+_1,\varphi^+_1,x) - \lambda\varphi^+_1\right) +  \pucci(D^2w) - \consp|Dw| - (\lambda+ \consz)w \\
& \geq A c (\lambda^+_1(F,\Omega) - \lambda ) - \eta \left( 1 + \frac{\beta}{2\eta} \right) \\
& = Ac (\lambda^+_1(F,\Omega) - \lambda ) +1 + 2\eta \\
& = 1,
\end{align*}
provided that we choose
\begin{equation*}
A:=\frac{2(1+\eta)}{c(\lambda^+_1(F,\Omega) - \lambda)}.
\end{equation*}
Therefore, the function $v$ satisfies \EQ{positive-supersolution-eq} and
\begin{equation*}
1 \leq v \leq 1 + \frac{1 + \eta}{\eta} + \frac{2(1+\eta)}{c(\lambda^+_1(F,\Omega)-\lambda)}.\qedhere
\end{equation*}
\end{proof}

\begin{proof}[Proof of \THM{ABP-eigen}]
Since we have not assumed that $F$ is concave, it suffices to show only the estimate \EQ{ABP-eigen}, since \EQ{ABP-eigen-minus} follows immediately by applying \EQ{ABP-eigen} to the operator $-F(-M,-p,-z,x)$. Let $w$ be the unique solution of the problem
\begin{equation}
\left\{ \begin{aligned}
& \Pucci(D^2w) + \consp |Dw| = - f^+ & \mbox{in} & \ \Omega, \\
& w = - u^+ & \mbox{on} & \ \partial \Omega.
\end{aligned} \right.
\end{equation}
Then $w \leq 0$, and according to the ABP inequality (\cite[Proposition 2.12]{Caffarelli:1996}) we have the estimate
\begin{equation*}
\| w \|_{L^\infty(\Omega)} \leq \sup_{\partial \Omega} u^+ + \consABP \| f^+ \|_{L^n(\Omega)}.
\end{equation*}
Define $z:= u + w$, and check that in the domain $\Omega$ the function $z$ satisfies
\begin{multline*}
F(D^2z,Dz,z,x) - \lambda z  \\
\leq F(D^2u,Du,u,x) -\lambda u + \Pucci(D^2w) + \consp|Dw| + \consz|w| + \lambda |w| \leq \eta |w|,
\end{multline*}
where $\eta$ is as in the proof of \LEM{positive-supersolution} above. Notice that $z \leq 0$ on $\partial \Omega$. According to \THM{BNV} and \EQ{positive-supersolution-eq},
\begin{equation*}
z \leq \left( \eta \| w \|_{L^\infty(\Omega)} \right) v,
\end{equation*}
where $v$ is as in \LEM{positive-supersolution}. Therefore,
\begin{align*}
\sup_\Omega u & \leq \sup_\Omega z + \| w \|_{L^\infty(\Omega)} \\
& \leq \left( 1 + \eta \| v \|_{L^\infty(\Omega)} \right) \| w \|_{L^\infty(\Omega)} \\
& \leq C \left( 1 + (\lambda^+_1(F,\Omega) - \lambda)^{-1} \right) \left( \sup_{\partial \Omega} u^+ + \consABP \| f^+ \|_{L^n(\Omega)} \right).\qedhere
\end{align*}
\end{proof}

\bibliographystyle{elsart-num-sort}
\bibliography{bibdb1}

\end{document}